\tikzstyle{vertex}=[circle ,draw, inner sep=0pt, minimum size=6pt]
\newcommand{\Ac}{\mathcal{A}}
\newcommand{\Lc}{\mathcal{L}}
\newcommand{\ZZ}{\mathbb{Z}}
\newcommand{\RR}{\mathbb{R}}
\newcommand{\kk}{\Bbbk}
\newcommand{\pf}{\mathfrak{p}}
\newcommand{\ov}{\overline}
\newcommand{\eb}{\mathbf{e}}
\newcommand{\xb}{\mathbf{x}}
\newcommand{\Ext}{\operatorname{Ext}}
\def\opn#1#2{\def#1{\operatorname{#2}}} 
\opn\conv{conv} \opn\dep{depth} \opn\Spec{Spec} \opn\cone{cone} \opn\ini{in} \opn\codeg{codeg} \opn\deg{deg}
\opn\Graph{Graph} \opn\sign{sign} \opn\Ehr{Ehr} \opn\rank{rank} \opn\type{type} \opn\reg{reg} \opn\core{core}
\opn\Hilb{Hilb} \opn\Indeg{Indeg} \opn\link{link} \opn\Tor{Tor} \opn\MNF{MNF} \opn\dep{depth} \opn\pdim{pdim}
\newtheorem{thm}{Theorem}[section]
\newtheorem{prop}[thm]{Proposition}
\theoremstyle{definition}
\newtheorem{ex}[thm]{Example}
\theoremstyle{remark}
\newtheorem{rem}[thm]{Remark}
\begin{document}

\title{Difference of Hilbert series of homogeneous monoid algebras and their normalizations}
\author{Akihiro Higashitani}

\address[A. Higashitani]{Department of Pure and Applied Mathematics, 
Graduate School of Information Science and Technology, 
Osaka University, Suita, Osaka 565-0871, Japan}
\email{higashitani@ist.osaka-u.ac.jp}

\subjclass[2020]{Primary: 13D40, Secondary: 13H10, 52B20.} 
\keywords{Hilbert series, affine monoid algebras, Serre's condition $(S_2)$, edge rings.}

\maketitle

\begin{abstract} 
Let $Q$ be an affine monoid, $\kk[Q]$ the associated monoid $\kk$-algebra, and $\kk[\ov{Q}]$ its normalization, where we let $\kk$ be a field.  
In this paper, in the case where $\kk[Q]$ is homogeneous (i.e., standard graded), a difference of the Hilbert series of $\kk[Q]$ and $\kk[\ov{Q}]$ is discussed. 
More precisely, we prove that if $\kk[Q]$ satisfies Serre's condition $(S_2)$, then the degree of the $h$-polynomial of $\kk[Q]$ is 
always greater than or equal to that of $\kk[\ov{Q}]$. 
Moreover, we also show counterexamples of this statement if we drop the assumption $(S_2)$. 
\end{abstract}

\section{Introduction}

\subsection{Backgrounds}
Let $\kk$ be a field throughout this paper. 
For the fundamental materials on commutative algebra, see \cite{BH}. 

Let $R=\bigoplus_{i \in \ZZ}R_i$ be a graded $\kk$-algebra with $\dim_\kk R_i< \infty$ for each $i$. 
Hilbert series of $R$ is one of the most fundamental invariants in the theory of commutative algebra. 
For the introduction to the theory of Hilbert series, see, e.g., \cite[Section 4]{BH}. 
Although the Hilbert series just provides the numerical information of $R$ as a $\kk$-vector space, it reflects several commutative-algebraic properties. 
For example, a classical observation claims that if $R$ is Cohen--Macaulay, 
then the Laurent polynomial appearing in the numerator of the Hilbert series of $R$ has nonnegative coefficients (\cite[Corollary 4.1.10]{BH}). 
Moreover, the Gorensteinness of $R$ is completely characterized in terms of a kind of symmetry of the Hilbert series if $R$ is a domain (\cite[Corollary 4.4.6]{BH}). 
Furthermore, by several recent studies, some connections with certain generalized notions of Gorensteinness (e.g. almost Gorenstein, nearly Gorenstein, level) 
and the Hilbert series have been discovered. (See, e.g., \cite{H, HY, M1}.) 
Since the Hilbert series of $R$ can be computed from the graded minimal free resolution of $R$, 
it also captures other invariants which have been well-discussed in commutative algebra, such as  
Krull dimension (or codimension), Koszulness, Castelnuovo-Mumford regularity, multiplicity, $a$-invariant, and so on. 

On the other hand, affine monoids and their associated $\kk$-algebras have been well studied by many researchers in several contexts. 
For the introduction to the theory of affine monoids and affine monoid $\kk$-algebras, see, e.g., \cite{BG} and \cite[Section 6.1]{BH}. 
Since geometric information of affine monoids is applicable for the analysis of algebraic properties of the associated $\kk$-algebras, 	
affine monoid $\kk$-algebras have been regarded as useful objects in commutative algebra. 
For example, the celebrated theorem by Hochster claims that if an affine monoid $Q$ is normal, i.e., $\ov{Q}=Q$, 
then $\kk[Q]$ is Cohen--Macaulay (\cite[Theorem 6.3.5 (a)]{BH}). 
Moreover, Cohen--Macaulayness of the affine monoid $\kk$-algebra $\kk[Q]$ is characterized in terms of geometric information on $Q$ 
together with the information about reduced homology groups over $\kk$ of certain simplicial complexes (\cite{TH}). 
Furthermore, Katth\"an reveals a strong connection with the structure of holes of affine monoids $Q$ (i.e., $\ov{Q} \setminus Q$) 
and ring-theoretic properties (e.g., Serre's condition $(S_2)$, $(R_1)$ and depth). See \cite{K}. 

By taking these backgrounds into account, in this paper, we study the Hilbert series of $\kk[Q]$ and $\kk[\ov{Q}]$. 
In particular, we focus on the difference of them. 


\subsection{Main Results}

To explain the main results of this paper, we introduce the notations used throughout this paper. 
Given a graded $\kk$-algebra $R=\bigoplus_{i \in \ZZ}R_i$ with $\dim_\kk R_i<\infty$ for each $i$, 
let $\Hilb(R,t)$ denote the Hilbert series of $R$, i.e., $$\Hilb(R,t)=\sum_{i \in \ZZ}\dim_\kk R_i t^i.$$
We say that $R$ is \textit{homogeneous} (or \textit{standard graded}) if it is generated by degree $1$ elements. 
If $R$ is homogeneous and of dimension $d$, then we see that $\Hilb(R,t)$ is of the following form: $$\Hilb(R,t)=\frac{h_R(t)}{(1-t)^d},$$
where $h_R(t)$ is a polynomial in $t$ with integer coefficients. We call this polynomial $h_R(t)$ the \textit{$h$-polynomial} of $R$.

When $R=\kk[Q]$ for some affine monoid $Q$, we call $Q$ \textit{homogeneous} if $\kk[Q]$ is homogeneous. 
We use the notation $\Hilb(Q,t)$ and $h_Q(t)$ instead of $\Hilb(\kk[Q],t)$ and $h_{\kk[Q]}(t)$, respectively. 

\smallskip

The following is the first main theorem of this paper: 
\begin{thm}\label{main1}
Let $Q$ be a homogeneous affine monoid and assume that $\kk[Q]$ satisfies Serre's condition $(S_2)$. 
Then $\deg (h_Q(t)) \geq \deg (h_{\ov{Q}}(t))$. 
\end{thm}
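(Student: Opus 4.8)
The plan is to study the normalization through the short exact sequence of graded $\kk[Q]$-modules
$$0 \to \kk[Q] \to \kk[\ov{Q}] \to M \to 0, \qquad M := \kk[\ov{Q}]/\kk[Q],$$
which exists because the normalization is module-finite over $\kk[Q]$. Both rings have dimension $d$ and the same multiplicity, so $\dim M \le d-1$; if $Q$ is normal then $M=0$ and $h_Q=h_{\ov{Q}}$, so assume $M\neq 0$. The first key step is to pin down $\dim M$ exactly. Since $\kk[\ov{Q}]$ is Cohen--Macaulay by Hochster's theorem while $\kk[Q]$ is only $(S_2)$ and is not normal, Serre's criterion forces $(R_1)$ to fail, so the non-normal locus $\operatorname{Supp} M$ contains a height-one prime and therefore $\dim M = d-1$. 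Writing $\Hilb(M,t)=g(t)/(1-t)^{d-1}$ with $g(1)\neq 0$, additivity of Hilbert series along the sequence gives the clean identity $h_{\ov{Q}}(t)-h_Q(t)=(1-t)\,g(t)$.

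Next I would reduce the inequality to a single positivity statement. Choosing a general linear system of parameters for $\kk[Q]$ that is a regular sequence on the Cohen--Macaulay module $\kk[\ov{Q}]$ shows that $h_{\ov{Q}}(t)$ has nonnegative coefficients, so its leading coefficient is positive. Comparing top-degree terms in $h_{\ov{Q}}=h_Q+(1-t)g$: if $\deg h_{\ov{Q}}>\deg h_Q$, then the leading term of $(1-t)g$ must equal that of $h_{\ov{Q}}$ and hence be positive, which forces the leading coefficient of $g$ to be negative. Thus it suffices to prove that the leading coefficient of $g$ is positive; equivalently, writing $P_M$ for the Hilbert polynomial of $M$, that at the largest degree $j$ with $\dim_\kk M_j\neq P_M(j)$ the difference has sign $(-1)^{d-1}$.

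To access this sign I would pass to local cohomology. From $(S_2)$ one has $H^0_{\mathfrak{m}}(\kk[Q])=H^1_{\mathfrak{m}}(\kk[Q])=0$ and $\dim H^i_{\mathfrak{m}}(\kk[Q])\le i-2$ for $i<d$; the long exact sequence then yields $H^0_{\mathfrak{m}}(M)=0$, isomorphisms $H^{i-1}_{\mathfrak{m}}(M)\cong H^i_{\mathfrak{m}}(\kk[Q])$ for $2\le i\le d-1$, and a short exact sequence $0\to H^{d-1}_{\mathfrak{m}}(M)\to H^d_{\mathfrak{m}}(\kk[Q])\to H^d_{\mathfrak{m}}(\kk[\ov{Q}])\to 0$; dualizing the original sequence gives the companion sequence of canonical modules $0\to\omega_{\kk[\ov{Q}]}\to\omega_{\kk[Q]}\to\omega_M\to 0$. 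Since $\dim_\kk M_j-P_M(j)=\sum_i(-1)^i\dim_\kk[H^i_{\mathfrak{m}}(M)]_j$ and the top summand $(-1)^{d-1}\dim_\kk[H^{d-1}_{\mathfrak{m}}(M)]_j=(-1)^{d-1}\dim_\kk[\omega_M]_{-j}$ already carries the correct sign $(-1)^{d-1}$, positivity of the leading coefficient of $g$ follows at once provided the lower modules $H^i_{\mathfrak{m}}(M)$ with $i<d-1$ vanish in the relevant top degree.

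The hard part is exactly to control these lower cohomology modules. By the Danilov--Stanley description, $\omega_{\kk[\ov{Q}]}$ is spanned by the lattice points in the relative interior of $\cone(Q)$, so $a_d(\kk[\ov{Q}])=-\gamma$, where $\gamma$ is the least degree of an interior lattice point and $\deg h_{\ov{Q}}=d-\gamma$. What must therefore be proved is that the lower local cohomology $H^i_{\mathfrak{m}}(\kk[Q])$ for $i<d$ (equivalently the deficiency modules $\Ext^{n-i}(\kk[Q],\omega)$, or the non-top cohomology of $M$) is concentrated in degrees strictly below $-\gamma$, so that at that degree only $\omega_M$ contributes and no cancellation occurs. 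This is precisely where $(S_2)$ is indispensable: without it $H^0_{\mathfrak{m}}$ or $H^1_{\mathfrak{m}}$ can survive and push the leading coefficient of $g$ to the wrong sign, which is what the counterexamples exploit. I expect to secure the required degree bound by combining the $(S_2)$ dimension estimates on $H^i_{\mathfrak{m}}(\kk[Q])$ with the combinatorial structure of the holes $\ov{Q}\setminus Q$ of a homogeneous monoid, which should confine the lower cohomology to the predicted low-degree range.
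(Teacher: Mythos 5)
Your setup is correct and, in its first half, runs parallel to the paper's own argument: the short exact sequence $0 \to \kk[Q] \to \kk[\ov{Q}] \to M \to 0$, the identity $h_{\ov{Q}}(t)-h_Q(t)=(1-t)g(t)$ (this is exactly Proposition~\ref{prop:mul}), the nonnegativity of the coefficients of $h_{\ov{Q}}(t)$, and the reduction of the theorem to the single claim that the leading coefficient of $g$ is positive are all sound. So is the translation of that claim, via the Grothendieck--Serre formula and the isomorphisms $H^{i}_{\mathfrak{m}}(M)\cong H^{i+1}_{\mathfrak{m}}(\kk[Q])$ for $i\le d-2$, into a vanishing statement for the lower local cohomology of $\kk[Q]$ in the one critical degree $-\gamma$.

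The problem is that the proof stops exactly where the content of the theorem begins. The statement you still need, $[H^{i}_{\mathfrak{m}}(\kk[Q])]_{-\gamma}=0$ for $2\le i\le d-1$, is never proved; your last paragraph only records the expectation that it will follow from ``the $(S_2)$ dimension estimates combined with the combinatorial structure of the holes.'' The dimension estimates cannot deliver this even in principle: $(S_2)$ gives $\dim H^{i}_{\mathfrak{m}}(\kk[Q])\le i-2$ for $i<d$, but a bound on the \emph{dimension} of a local cohomology module says nothing about the \emph{degrees} in which it is concentrated --- a finite-length module can sit in arbitrarily high degree --- so no inequality of the form $a_i(\kk[Q])<-\gamma$ can be extracted from them. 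All the work must come from the structure of the holes, and this is precisely what the paper uses, by a quite different and more elementary route: by Theorems~\ref{thm:hole} and~\ref{thm:S2}, under $(S_2)$ the set $\ov{Q}\setminus Q$ is a union of families $(q_i+\ZZ F_i)\cap\RR_{\geq 0}Q$ with every $F_i$ of dimension $d-1$; inclusion--exclusion over the intersection lattice of the affine hyperplanes $q_i+\RR F_i$ expresses $h_Q(t)-h_{\ov{Q}}(t)$ as a sum of terms $(-1)^{d-\dim X}\mu(X)\,t^{a}\,(t-1)^{d-\dim X}h_{\ov{X}}(t)$, where each $\ov{X}$ is a normal (hence Cohen--Macaulay) monoid, so $h_{\ov{X}}(t)$ has nonnegative coefficients, and $(-1)^{d-\dim X}\mu(X)\ge 0$ by the sign-alternation of the M\"obius function of the arrangement. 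Every summand therefore has nonnegative leading coefficient, no cancellation can occur in top degree, and $\deg h_Q\ge\deg h_{\ov{Q}}$ follows. To complete your local-cohomology route you would have to re-derive the required degree bound on the $H^{i}_{\mathfrak{m}}$ from this same hole decomposition, i.e., prove the paper's key step in dual form; as written, the proposal has a genuine gap at its decisive step.
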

Here, $\deg(f(t))$ denotes the degree of the polynomial $f(t)$. For the definition of Serre's condition, see Subsection~\ref{subsec:Serre}. 

The following second main theorem shows the existence of counterexamples of Theorem~\ref{main1} if we drop the assumption $(S_2)$. 
\begin{thm}\label{main2}
For any positive integer $m$, there exists a homogeneous affine monoid $Q$ such that $\deg(h_{\ov{Q}}(t)) - \deg(h_Q(t))=m$. 
\end{thm}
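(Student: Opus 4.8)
The plan is to isolate a single \emph{gap-one} building block and then amplify it by taking direct sums of monoids. Recall that for affine monoids $Q_1,Q_2$ one has $\kk[Q_1\oplus Q_2]\cong \kk[Q_1]\otimes_\kk\kk[Q_2]$ as graded $\kk$-algebras (graded by total degree), so $\Hilb(Q_1\oplus Q_2,t)=\Hilb(Q_1,t)\,\Hilb(Q_2,t)$ and hence $h_{Q_1\oplus Q_2}(t)=h_{Q_1}(t)\,h_{Q_2}(t)$ with $\dim=\dim_1+\dim_2$; in particular the $h$-degrees add. Moreover $\cone(Q_1\oplus Q_2)=\cone(Q_1)\times\cone(Q_2)$ and $\ZZ(Q_1\oplus Q_2)=\ZZ Q_1\times\ZZ Q_2$, whence $\ov{Q_1\oplus Q_2}=\ov{Q_1}\oplus\ov{Q_2}$ and likewise $\deg h_{\ov{Q_1\oplus Q_2}}=\deg h_{\ov{Q_1}}+\deg h_{\ov{Q_2}}$. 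Setting $\mathrm{gap}(Q):=\deg h_{\ov Q}-\deg h_Q$, these identities give $\mathrm{gap}(Q_1\oplus Q_2)=\mathrm{gap}(Q_1)+\mathrm{gap}(Q_2)$. Thus it suffices to exhibit one homogeneous $Q_0$ with $\mathrm{gap}(Q_0)=1$: the $m$-fold direct sum $Q_0^{\oplus m}$ is again homogeneous and has $\mathrm{gap}=m$, which is the desired monoid (and it automatically fails $(S_2)$ by the contrapositive of Theorem~\ref{main1}).

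It remains to build $Q_0$, and a preliminary constraint dictates its shape. Let $C:=\kk[\ov{Q}]/\kk[Q]$ be the graded module of holes, $\eta_n:=\dim_\kk C_n\ge 0$ (so $\eta_0=0$), and $H(t)=\sum_n\eta_n t^n$; then $h_{\ov Q}(t)-h_Q(t)=(1-t)^dH(t)$, and since $\kk[\ov Q]$ is the Ehrhart ring of the degree-one polytope $P$ we have $\deg h_{\ov Q}\le d-1$. If $\dim C=0$ then $H$ is a nonzero polynomial, $(1-t)^dH$ has degree $\ge d>\deg h_{\ov Q}$, and one is forced into $\deg h_Q>\deg h_{\ov Q}$, the wrong direction. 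So $C$ must have positive dimension $e$; writing $N(t):=(1-t)^eH(t)$ (a polynomial with $N(0)=\eta_0=0$ and $N(1)=\mathrm{mult}(C)>0$, so $\deg N\ge 1$), the identity becomes $h_{\ov Q}-h_Q=(1-t)^{d-e}N$. A degree drop requires this to have degree $\deg h_{\ov Q}$ with the same positive leading coefficient; since $\deg N=\deg h_{\ov Q}-(d-e)\le e-1$ this already forces $e\ge 2$, and for the cleanest pattern $N(t)=n_1t$ (i.e.\ $\eta_n=n_1n$) the leading coefficient of $(1-t)^{d-e}N$ has sign $(-1)^{d-e}$, which must be positive. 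Hence $d-e$ is even and the smallest case is $d=4$, $e=2$; in particular nothing works in dimension $\le 3$.

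I would then realize $d=4$, $e=2$ as a \emph{pinched edge}. Choose a $3$-dimensional lattice polytope $P$ with an interior lattice point (so $\codeg(P)=1$ and $h_{\ov{Q_0}}=\delta_P$ has degree $3$ with top coefficient $\#(\mathrm{int}\,P\cap\ZZ^3)$) together with an edge $E$ of lattice length $\ell$ such that $\#(\mathrm{int}\,P\cap\ZZ^3)=\ell-1$, for instance a square pyramid with one interior point and a length-$2$ edge. Let $\ov{Q_0}=\cone(P\times\{1\})\cap\ZZ^4$, so $\Hilb(\ov{Q_0},t)=\delta_P(t)/(1-t)^4$, and let $Q_0$ be generated by all degree-one lattice points of $P$ except the $\ell-1$ interior lattice points of $E$; by construction $Q_0$ is homogeneous. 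Along the face $F=\cone(E\times\{1\})$ one has $Q_0\cap F=\langle u,w\rangle$ for the endpoints $u,w$ of $E$, contributing exactly $(\ell-1)n$ holes in degree $n$; granting that these are all the holes, $\eta_n=(\ell-1)n$, $H(t)=(\ell-1)t/(1-t)^2$, and $h_{Q_0}(t)=\delta_P(t)-(\ell-1)\,t(1-t)^2$. The coefficient of $t^3$ becomes $\#(\mathrm{int}\,P)-(\ell-1)=0$ while that of $t^2$ stays positive, so $\deg h_{Q_0}=2=\deg h_{\ov{Q_0}}-1$ and $\mathrm{gap}(Q_0)=1$.

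The main obstacle is the parenthetical ``granting that these are all the holes'': I must prove that every lattice point of $\ov{Q_0}$ lying off $F$ already lies in $Q_0$, i.e.\ that the omitted interior edge points are never needed to generate points away from $E$. This is an integral-Carath\'eodory/IDP-type statement about $P$ and the chosen edge, and it is precisely the point at which the explicit polytope must be pinned down and checked; once it holds, the Hilbert-series bookkeeping above is routine and the top coefficient matches so that the degree drops by exactly one. I expect this confinement of the holes to $F$ to be the only genuinely delicate step, with the direct-sum amplification and the degree arithmetic being formal.
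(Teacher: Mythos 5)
Your reduction step is fine, and it is essentially the paper's own device: the direct sum $Q_1\oplus Q_2$ of two homogeneous affine monoids is, up to a unimodular change of coordinates, exactly the join $Q_1\star Q_2$ of Proposition~\ref{prop:join}, and in both formulations Hilbert series multiply, so gaps add and everything reduces to exhibiting a single gap-one monoid. The genuine gap is that your base case $Q_0$ is never established. The claim you defer---that all holes of $Q_0$ lie on the face $\cone(E)$---is not a routine check to be filled in later; it is false for the very instance you propose. Take the square pyramid $P$ with base $[0,2]^2\times\{0\}$ and apex $(1,1,2)$: it has the data you ask for (unique interior lattice point $(1,1,1)$, and the base edge $E$ from $(0,0,0)$ to $(2,0,0)$ of lattice length $\ell=2$). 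Form $Q_0$ from all height-one lattice points of $P$ except the midpoint $(1,0,0)$; one checks $\ZZ Q_0=\ZZ^4$, so $\ov{Q_0}=\RR_{\geq 0}Q_0\cap\ZZ^4$. Now consider $\beta=(2,1,1,2)=(1,0,0,1)+(1,1,1,1)$. It lies in $\ov{Q_0}$ and off $\cone(E)$, but $\beta\notin Q_0$: since every generator has last coordinate $1$, any expression of $\beta$ uses exactly two lattice points of $P$, and because the only lattice point of $P$ with third coordinate $1$ is $(1,1,1)$ (and the apex cannot occur), the unique decomposition is $(1,0,0)+(1,1,1)$, which uses the deleted generator. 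So $\beta$ is a hole off $\cone(E)$; in fact it starts a whole extra family of holes (e.g.\ $(2,1,1,n)$ for all $n\geq 2$), so your count $\eta_n=(\ell-1)n$, and with it the formula $h_{Q_0}(t)=\delta_P(t)-(\ell-1)t(1-t)^2$, collapses.

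This confinement of the holes to a single translated face is precisely where the paper does its real work. Its gap-one example is the edge ring of an explicit graph (Example~\ref{ex:Q}, treated by hand as the family $G_k$ in Proposition~\ref{prop:Q_k}), and the Ohsugi--Hibi description \eqref{eq:exc} of $\ov{Q_G}$ supplies exactly the combinatorial control needed to prove the analogue of your claim, namely $\ov{Q}\setminus Q=q+Q'$ with $Q'$ a face: the inclusion ``$\subset$'' is proved by explicit rewritings such as $q+\eb_{v_i,u_3}=\eb_{u_1,u_3}+\eb_{u_2,u_3}+\eb_{v_i,u_6}+\eb_{u_4,u_5}$, and ``$\supset$'' by a parity obstruction. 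Your parity heuristics ($e\geq 2$, $d-e$ even, $d\geq 4$) are reasonable motivation but carry no logical weight (and tacitly assume the hole module's numerator has positive leading coefficient). Until you either prove a hole-confinement lemma for a corrected polytope--edge pair---the square pyramid will not do---or fall back on a verified example as the paper does, the proof is incomplete at its one essential step.
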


\subsection{Organization of this paper}

In Section~\ref{sec:pre}, we prepare the materials for the proofs of the main results. 
In Section~\ref{sec:main1}, we give a proof of Theorem~\ref{main1}. 
In Section~\ref{sec:main2}, we give a proof of Theorem~\ref{main2}. 

\subsection*{Acknowledgements}
This paper is partially supported by KAKENHI 20K03513 and 21KK0043. 

\medskip

\section{Preliminaries}\label{sec:pre}

In this section, we recall several materials used in this paper. 

\subsection{Affine monoids and hole modules}
An \textit{affine monoid} is a finitely generated submonoid of $\ZZ^d$ for some $d$. 
Given an affine monoid $Q \subset \ZZ_{\geq 0}^d$, we can associate the $\kk$-algebra $\kk[Q] \subset \kk[x_1,\ldots,x_d]$ defined by 
$$\kk[Q]=\kk[\xb^\alpha : \alpha \in Q],$$
where for $\alpha=(\alpha_1,\ldots,\alpha_d) \in \ZZ_{\geq 0}^d$, we let $\xb^\alpha=x_1^{\alpha_1}\cdots x_d^{\alpha_d}$. 
We call this $\kk$-algebra $\kk[Q]$ the \textit{monoid algebra} of $Q$. 
Affine monoids and monoid algebras have been called as affine semigroups and affine semigroup rings (e.g., in \cite{BH}), 
but it is becoming to be called them as affine monoids and monoid algebras, respectively. 
Those are the same notions, but we employ the terminology ``monoid'' in this paper. 

We recall some fundamental notions on affine monoids and their monoid $\kk$-algebras. 
Let $Q \subset \ZZ^d_{\geq 0}$ be an affine monoid. The minimal generating system of $Q$ is the minimal finite subset $\{\alpha_1,\ldots,\alpha_s\}$ of $Q$ 
such that $\displaystyle Q=\left\{\sum_{i=1}^s n_i\alpha_i : n_i \in \ZZ_{\geq 0}\right\}$. 
In this case, we use the notation $Q=\langle \alpha_1,\ldots,\alpha_s \rangle$. 

Let $Q=\langle \alpha_1,\ldots,\alpha_s \rangle$ for some $\alpha_i \in \ZZ_{\geq 0}^d$. 
\begin{itemize}
\item Let $\ZZ Q$ denote the free abelian group generated by $Q$, i.e., 
$\displaystyle \ZZ Q=\left\{\sum_{i=1}^s z_i \alpha_i : z_i \in \ZZ\right\}.$
\item Let $\RR_{\geq 0}Q$ denote the polyhedral cone generated by $Q$, i.e., 
$\displaystyle\RR_{\geq 0}Q = \left\{\sum_{i=1}^s r_i \alpha_i : r_i \in \RR_{\geq 0}\right\}.$
\item Let $\ov{Q}$ be the \textit{normalization} of $Q$, i.e., $$\ov{Q}=\ZZ Q \cap \RR_{\geq 0}Q.$$
Clearly, we have $Q \subset \ov{Q}$. We say that $Q$ is \textit{normal} if $Q=\ov{Q}$ holds. 
Note that $\kk[\ov{Q}]$ coincides with the normalization of $\kk[Q]$. 
It is known that $\kk[Q]$ is Cohen--Macaulay if $Q$ is normal. (See \cite[Theorem 6.3.5]{BH}.) 
\item A \textit{face} $F$ of $Q$ is a subset of $Q$ satisfying the following: $\alpha,\beta \in Q$, 
$\alpha + \beta \in F \Rightarrow \alpha \in F \text{ and }\beta \in F.$
The dimension of a face $F$ is defined to be the rank of the free abelian group $\ZZ F$. 
\item We say that $Q$ is \textit{positive} if the minimal face of $Q$ is $\{0\}$. 
\item Regarding $\kk[Q]$, we have $\dim \kk[Q]=\dim Q$. 
\item We say that an affine monoid $Q$ is \textit{homogeneous} if the minimal generating set of $Q$ lies on the same hyperplane not containing the origin. 
\end{itemize}
Throughout this paper, affine monoids are always assumed to be positive. 
We recall the following statement, which will play a crucial role in the proof of Theorem~\ref{main1}. 
\begin{thm}[{\cite[Theorem 3.1 and Proposition 5.5]{K}}]\label{thm:hole}
Let $Q$ be an affine monoid. Then there exists a (not-necessarily disjoint) decomposition: 
\begin{align}\label{eq:decomp}\ov{Q} \setminus Q=\bigcup_{i=1}^\ell (q_i + \ZZ F_i) \cap \RR_{\geq 0}Q\end{align}
with $q_i \in \ov{Q}$ and faces $F_i$ of $Q$. 

Moreover, in the case $\ell=1$, i.e., if $\ov{Q} \setminus Q = (q + \ZZ F) \cap \RR_{\geq 0}Q$ for some $q \in \ov{Q}$ and a face $F$ of $Q$, 
then $\dep \kk[Q] = \dim F +1$. 
\end{thm}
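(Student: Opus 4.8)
The plan is to realize the hole set as the monomial support of a module and to read both assertions off its module structure. Set $M = \kk[\ov{Q}]/\kk[Q]$. Since the normalization $\kk[\ov{Q}]$ is module-finite over $\kk[Q]$, the quotient $M$ is a finitely generated $\ZZ^d$-graded $\kk[Q]$-module whose monomial support is exactly $\ov{Q} \setminus Q$. My first step would be to record the elementary but decisive geometric observation governing $M$: if $h \in \ov{Q} \setminus Q$ and $f$ lies in a face $F$ with $h - f \in \RR_{\geq 0}Q$, then $h - f$ is again a hole, for otherwise $h = (h-f) + f \in Q$. This ``backward invariance along faces'' is what forces the pieces of the decomposition to have the shape $(q_i + \ZZ F_i) \cap \RR_{\geq 0}Q$ rather than merely the translated faces $q_i + F_i$.

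For the decomposition itself, I would pass to a $\ZZ^d$-graded prime filtration of $M$: a chain of graded submodules whose successive quotients are shifts $(\kk[Q]/\pf_{F_i})(-a_i) \cong \kk[F_i](-a_i)$ by graded (hence monomial, hence face) primes. The supports $a_i + F_i$ of these quotients already cover $\ov{Q} \setminus Q$. To reach the stated form I would enlarge each translated face $a_i + F_i$ to $(a_i + \ZZ F_i) \cap \RR_{\geq 0}Q$: given a lattice element $v \in \ZZ F_i$, write $v = f_+ - f_-$ with $f_\pm \in F_i$; then $a_i + v + f_- = a_i + f_+$ is a hole, and backward invariance along $f_-$ shows that $a_i + v$ is a hole whenever it lies in $\RR_{\geq 0}Q$. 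The main obstacle in this part is the bookkeeping needed to verify that the enlarged pieces remain inside the holes and that their union is exactly $\ov{Q} \setminus Q$, which is where the interplay between the cone $\RR_{\geq 0}Q$, the group $\ZZ Q$, and the face groups $\ZZ F_i$ must be handled with care.

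For the depth statement, assume $\ell = 1$, so $\ov{Q} \setminus Q = (q + \ZZ F) \cap \RR_{\geq 0}Q$. I would first analyze the $\kk[Q]$-action on $M$ directly on monomials: for $\alpha \in Q$ and a hole $h$, the product $\xb^\alpha \xb^h$ lands in $q + \ZZ F$ precisely when $\alpha \in \ZZ F$, and it is zero in $M$ otherwise. Using $Q \cap \ZZ F = F$, this says that $\pf_F$ annihilates $M$, so $M$ is a module over $\kk[F] = \kk[Q]/\pf_F$, and its support is a full-dimensional lattice set inside the $(\dim F)$-dimensional affine space $q + \RR F$ cut out by the cone. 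Identifying this set with the lattice points of a translated rational cone shows that $M$ is a shift of a normal, hence Cohen--Macaulay, $\kk[F]$-module, giving $\dep M = \dim F$.

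Finally I would feed this into the short exact sequence $0 \to \kk[Q] \to \kk[\ov{Q}] \to M \to 0$ and its long exact sequence in local cohomology $H^i_{\mathfrak{m}}(-)$. Because $\kk[\ov{Q}]$ is Cohen--Macaulay of dimension $d = \dim Q$ by Hochster's theorem, we have $H^i_{\mathfrak{m}}(\kk[\ov{Q}]) = 0$ for $i < d$, so the connecting maps give isomorphisms $H^i_{\mathfrak{m}}(\kk[Q]) \cong H^{i-1}_{\mathfrak{m}}(M)$ in the relevant range. Reading off the least nonvanishing degree then yields $\dep \kk[Q] = \dep M + 1 = \dim F + 1$. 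The delicate point here is confirming that $\dep M = \dim F$ stays within the range where these cohomology isomorphisms apply, i.e. that the top cohomology of the normalization does not interfere; I expect this to be the main technical obstacle of the second part.
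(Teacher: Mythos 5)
First, a caveat: the paper contains no proof of this statement at all --- it is imported from \cite[Theorem 3.1 and Proposition 5.5]{K} with only a remark about converting to positive monoids --- so your attempt can only be judged on its own merits, not against an argument in the text.

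Your proof of the decomposition \eqref{eq:decomp} is correct. The two ingredients --- a $\ZZ Q$-graded prime filtration of $M=\kk[\ov{Q}]/\kk[Q]$ with subquotients $\kk[F_i](-a_i)$ (the graded primes of $\kk[Q]$ are exactly the face primes), and the enlargement of each translated face $a_i+F_i$ to $(a_i+\ZZ F_i)\cap\RR_{\geq 0}Q$ via your ``backward invariance'' observation --- fit together exactly as you say: in a filtration the graded vector-space dimensions add, so the support of $M$ is precisely $\bigcup_i(a_i+F_i)$, each $a_i+F_i$ consists of holes, and the enlargement argument ($v=f_+-f_-$ with $f_\pm\in F_i$, so $a_i+v+f_-=a_i+f_+$ is a hole) keeps the enlarged sets inside $\ov{Q}\setminus Q$. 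This is essentially Katth\"an's own argument for the first assertion.

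The depth part, however, has a genuine gap at one precise spot, and it is not repairable in the generality you (and the quoted statement) assume. Everything up to ``$\pf_F$ annihilates $M$'' is fine, and so is the final local-cohomology bookkeeping, which correctly yields $\dep\kk[Q]=\min\{\dep M+1,\dim Q\}$. The false step is: \emph{``Identifying this set with the lattice points of a translated rational cone shows that $M$ is a shift of a normal, hence Cohen--Macaulay, $\kk[F]$-module.''} The support of $M$ is the set of lattice points of the polyhedron $(q+\RR F)\cap\RR_{\geq 0}Q$, whose recession cone is $\RR_{\geq 0}F$ but which is in general \emph{not} a translate of $\RR_{\geq 0}F$: it acquires a facet from every facet of $\RR_{\geq 0}Q$ not containing $F$, and lattice-point modules of such polyhedra need not be Cohen--Macaulay. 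Concretely, let $\mathfrak{m}=(u,v)\subset\kk[u,v]$ and let $Q\subset\ZZ^3_{\geq 0}$ consist of all $(a,b,n)$ with $a,b\geq 0$, $a+b\geq n$ and $n\neq 1$, so that $\kk[Q]=\kk[u,v]\oplus\bigoplus_{n\geq 2}\mathfrak{m}^nt^n\subset\kk[u,v][t]$ (finitely generated by the elements at levels $0,2,3$). Its normalization is the Rees monoid $\ov{Q}=\{(a,b,n): a,b\geq 0,\ 0\leq n\leq a+b\}$, and $F=Q\cap\{n=0\}$ is a $2$-dimensional face with $\ov{Q}\setminus Q=\{(a,b,1): a,b\geq 0,\ a+b\geq 1\}=\bigl((1,1,1)+\ZZ F\bigr)\cap\RR_{\geq 0}Q$, a single family exactly as in the hypothesis. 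But here $M\cong\mathfrak{m}$ as a $\kk[F]=\kk[u,v]$-module (two generators $(1,0,1),(0,1,1)$ with one syzygy), and $\dep\mathfrak{m}=1\neq 2=\dim F$; your own exact-sequence argument then gives $\dep\kk[Q]=2$, not $\dim F+1=3$. So the Cohen--Macaulayness of $M$ is not a matter of careful bookkeeping: it genuinely fails, and with it the literal form of the statement quoted here. The hypothesis ``$\ell=1$'' has to be read with respect to the particular decomposition constructed in \cite{K}, or equivalently one needs a hypothesis guaranteeing that the hole module is Cohen--Macaulay --- for instance that $\ov{Q}\setminus Q$ is a single translated face $q+F$, which is the situation actually used in Proposition~\ref{prop:Q_k}; in that case $M\cong\kk[F]$ up to a shift and your local-cohomology argument does close the proof.
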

We call $q_i+\ZZ F_i$ appearing in \eqref{eq:decomp} a \textit{$j$-dimensional family of holes} of $Q$ if $\dim F_i=j$.

Note that Theorem 3.1 and Proposition 5.5 in \cite{K} claim the similar statement for non-necessarily positive affine monoids, 
but we convert the statement into the case of positive affine monoids. 

\subsection{Serre's condition}\label{subsec:Serre}
Let $R$ be a Noetherian ring and let $M$ be a finitely generated $R$-module. 
For a nonnegative integer $m$, we say that $M$ satisfies \textit{Serre's condition $(S_m)$} if 
$$\dep M_\pf \geq \min\{m,\dim M_\pf\} \text{ for all }\pf \in \Spec R,$$
where $\Spec R$ denotes the set of all prime ideals of $R$. 
Namely, $M$ satisfies $(S_m)$ if and only if $M_\pf$ is Cohen--Macaulay for any $\pf \in \Spec R$ with $\dep M_\pf <m$. 
\begin{prop}[{cf. \cite{Dao}}]\label{prop:dao}
Let $A=\kk[x_1,\ldots,x_n]$, let $I$ be an ideal of $A$ and let $R=A/I$. 
Then $R$ satisfies $(S_m)$ if and only if $\dim(\Ext^i_A(R,A)) \leq n-m-i$ holds for any $i > d-\dim R$. 
\end{prop}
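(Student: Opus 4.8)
The plan is to reduce the global statement to a prime-by-prime comparison and to translate depth into (non)vanishing of $\Ext$-modules via local duality. Since $R$ is a finitely generated $A$-module, $\Ext$ commutes with localization, so $\Ext^i_A(R,A)_\pf \cong \Ext^i_{A_\pf}(R_\pf,A_\pf)$ for every $\pf \in \Spec A$, and hence $\dim \Ext^i_A(R,A) = \max\{\dim A/\pf : \Ext^i_{A_\pf}(R_\pf,A_\pf)\neq 0\}$. Because $A$ is a polynomial ring over a field I may use the dimension formula $\dim A/\pf = n-\operatorname{ht}\pf$ throughout. Thus both the hypothesis $(S_m)$ and the conclusion become assertions indexed by the primes $\pf \supseteq I$.

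The key tool is the depth-sensitivity of $\Ext$ over the regular local rings $A_\pf$. Writing $h=\operatorname{ht}\pf=\dim A_\pf$, the ring $A_\pf$ is Gorenstein with canonical module $A_\pf$ (up to a shift), so local duality gives $\Ext^i_{A_\pf}(R_\pf,A_\pf)^\vee \cong H^{\,h-i}_{\pf A_\pf}(R_\pf)$, where $(-)^\vee$ is Matlis duality. By Grothendieck (non)vanishing the right-hand side is zero outside the range $\dep R_\pf \le h-i \le \dim R_\pf$ and nonzero at both endpoints. Reading this in terms of $i$, the module $\Ext^i_{A_\pf}(R_\pf,A_\pf)$ is nonzero precisely for $h-\dim R_\pf \le i \le h-\dep R_\pf$; in particular the largest index of nonvanishing is $i=h-\dep R_\pf$ and the smallest is $i=h-\dim R_\pf$.

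With this dictionary the equivalence is bookkeeping. For the forward implication, assume $(S_m)$ and fix $i>n-\dim R$; I would show that every $\pf$ with $\Ext^i_{A_\pf}(R_\pf,A_\pf)\neq 0$ satisfies $\dim A/\pf \le n-m-i$. Since $i\le h-\dep R_\pf$, it suffices to see $\dep R_\pf \ge m$. If $\dim R_\pf \ge m$ this is immediate from $(S_m)$; if $\dim R_\pf < m$ then $(S_m)$ forces $R_\pf$ to be Cohen--Macaulay, so $\Ext^i$ is nonzero only at the single index $i=h-\dim R_\pf = n-(\dim A/\pf+\dim R_\pf)=n-\dim R$, which is excluded by $i>n-\dim R$. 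For the converse, if $(S_m)$ fails at some $\pf$, put $t=\dep R_\pf<\min\{m,\dim R_\pf\}$ and set $i=h-t$; then $\Ext^i_{A_\pf}(R_\pf,A_\pf)\neq 0$, so $\dim\Ext^i_A(R,A)\ge \dim A/\pf = n-i-t>n-m-i$, while $t<\dim R_\pf$ together with $\dim R \ge \dim R_\pf+\dim A/\pf$ yields $i>n-\dim R$, contradicting the bound.

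The main obstacle is precisely the index bookkeeping in the case $\dim R_\pf<m$: one must guarantee that primes at which $R_\pf$ is Cohen--Macaulay of small dimension contribute only at the excluded index $i=n-\dim R$, and not spuriously at some larger index that would break the bound. This is where the identity $\dim R_\pf+\dim A/\pf=\dim R$ enters, which is available because $\kk[Q]$ is a domain, hence equidimensional, and $A$ is catenary. (Alternatively, one may simply invoke the cited result of Dao.) Everything else is the routine passage between depths of localizations and the supports of the $\Ext$-modules described above.
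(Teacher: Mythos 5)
The paper gives no proof of this proposition at all: it is quoted with a ``cf.''\ citation to Dao's MathOverflow answer \cite{Dao} and then used once, in Example~\ref{ex:Q}, to certify that an edge ring fails $(S_2)$. So your argument can only be compared with the cited source, and what you give is indeed the standard derivation behind it: localize $\Ext$, apply local duality over the regular local rings $A_\pf$, and turn Grothendieck vanishing/nonvanishing into the window $\operatorname{ht}\pf-\dim R_\pf\le i\le \operatorname{ht}\pf-\dep R_\pf$. The bookkeeping in both directions is correct, granted the identity $\dim R_\pf+\dim A/\pf=\dim R$ that you invoke in the forward direction. Two small remarks: ``nonzero precisely for'' overstates Grothendieck nonvanishing, since local cohomology may vanish at intermediate indices of the window; but you use only vanishing outside the window and nonvanishing at its endpoints, so nothing breaks. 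Also, the paper's ``$i>d-\dim R$'' must be read with $d=n$, as you silently do.

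What you downplay as ``the main obstacle'' is in fact the crux, and your way of handling it is the only correct one: without equidimensionality the proposition as printed is false, so no fully general proof exists. Take $n=3$, $I=(x_1x_2,x_1x_3)=(x_1)\cap(x_2,x_3)$, so $\dim R=2$. Both associated primes are minimal, hence $R$ satisfies $(S_1)$. Dualizing the resolution $0\to A\to A^2\to A\to R\to 0$ gives $\Ext^2_A(R,A)\cong A/(x_2,x_3)$, which has dimension $1$; since $2>n-\dim R=1$ and $1>n-1-2=0$, the stated inequality fails for $m=1$. The failure occurs exactly at the step you isolate: at the Cohen--Macaulay prime $\pf=(x_2,x_3)$ the unique nonvanishing index is $\operatorname{ht}\pf-\dim R_\pf=2$ rather than $n-\dim R=1$, because $\dim R_\pf+\dim A/\pf<\dim R$ there. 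So your proof should be read as a proof of the corrected statement, with $R$ equidimensional (e.g.\ a domain); that hypothesis should be stated up front rather than appear midway through the forward implication. This suffices for the paper, since the proposition is applied only to $\kk[Q]$, a domain, and the direction actually used in Example~\ref{ex:Q} (the contrapositive of ``$(S_m)$ implies the dimension bounds'') is precisely the direction that needs equidimensionality.
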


Regarding $(S_2)$ for monoid algebras $\kk[Q]$, we konw the following: 
\begin{thm}[{\cite[Theorem 5.2]{K}}]\label{thm:S2}
Let $Q$ be a positive affine monoid of dimension $d$. 
Then $\kk[Q]$ satisfies $(S_2)$ if and only if every family of holes of $Q$ is of dimension $d-1$. 
\end{thm}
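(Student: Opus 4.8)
The plan is to compare $\kk[Q]$ and $\kk[\ov{Q}]$ through the short exact sequence of finitely generated modules over the polynomial ring $A=\kk[y_1,\dots,y_s]$ presenting $\kk[Q]=A/I_Q$, namely
$$0 \to \kk[Q] \to \kk[\ov{Q}] \to N \to 0, \qquad N:=\kk[\ov{Q}]/\kk[Q],$$
and to read Serre's condition off the deficiency (Ext) modules $K^i(M):=\Ext^{s-i}_A(M,A)$ using Proposition~\ref{prop:dao}. The module $N$ is the hole module: it is spanned by the monomials $\xb^\alpha$ with $\alpha\in\ov{Q}\setminus Q$, it is a finitely generated $A$-module, and since $\kk[Q]$ and $\kk[\ov{Q}]$ share the same field of fractions its support is exactly the non-normal locus of $\kk[Q]$, which is a proper closed subset; hence $\dim N\le d-1$.

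First I would use that $\kk[\ov{Q}]$ is Cohen--Macaulay of dimension $d$ (normality together with Hochster's theorem), so that as an $A$-module $K^i(\kk[\ov{Q}])=0$ for all $i\ne d$. Applying $\Ext^\bullet_A(-,A)$ to the sequence above and reading off the resulting long exact sequence of deficiency modules then gives the isomorphisms
$$K^i(\kk[Q]) \cong K^{i-1}(N) \qquad \text{for every } i\le d-1,$$
because both $K^i(\kk[\ov{Q}])$ and $K^{i-1}(\kk[\ov{Q}])$ vanish in this range.

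Next I would invoke Proposition~\ref{prop:dao}: since $\kk[Q]$ is a domain, hence equidimensional, $(S_2)$ is equivalent to $\dim K^i(\kk[Q])\le i-2$ for all $i<d$. Transporting this across the isomorphisms above converts it into the single list of conditions $\dim K^j(N)\le j-1$ for all $0\le j\le d-2$. The decisive input is the local-duality fact that for any $A$-module $M$ and any prime $\pf$ with $\dim A/\pf=j$, the top-dimensional part of $K^j(M)$ is supported at $\pf$ precisely when $\pf\in\operatorname{Ass}(M)$; consequently $\dim K^j(N)=j$ if and only if $N$ has an associated prime $\pf$ with $\dim A/\pf=j$, and $\dim K^j(N)<j$ otherwise. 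Thus the list of conditions holds if and only if $N$ has no associated prime of dimension $\le d-2$, i.e. every associated prime of $N$ has dimension $d-1$.

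Finally I would translate this back to holes via Theorem~\ref{thm:hole}: the families $q_i+\ZZ F_i$ decompose $\ov{Q}\setminus Q$, each one contributing to $\operatorname{Ass}(N)$ the face prime $\pf_{F_i}$ with $\dim A/\pf_{F_i}=\dim F_i$, so that ``every associated prime of $N$ has dimension $d-1$'' reads exactly as ``every family of holes has dimension $d-1$''; the normal case $N=0$ is vacuous. The hard part will be this last step: pinning down $\operatorname{Ass}(N)$ precisely in terms of the families, that is, showing that each $j$-dimensional family genuinely forces $\pf_{F_i}\in\operatorname{Ass}(N)$ with $\dim A/\pf_{F_i}=j$ and that no additional (in particular embedded, lower-dimensional) associated primes of $N$ can occur, so that the homological condition on the $K^\bullet(N)$ matches the combinatorial condition on families. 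This requires understanding the $\kk[Q]$-module structure carried by each family of holes, which is where the structural results behind Theorem~\ref{thm:hole} do the real work.
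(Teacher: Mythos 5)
First, a point of reference: the paper itself contains no proof of this statement --- it is quoted verbatim from \cite[Theorem 5.2]{K} and used as a black box --- so your attempt can only be judged on its own merits. On those merits, your homological reduction is correct and well executed: the short exact sequence, the vanishing of $K^i(\kk[\ov{Q}]):=\Ext_A^{s-i}(\kk[\ov{Q}],A)$ for $i\neq d$ (Hochster), the resulting isomorphisms $K^i(\kk[Q])\cong K^{i-1}(N)$ for $i\le d-1$, the translation of Proposition~\ref{prop:dao} into $\dim K^i(\kk[Q])\le i-2$ for $i<d$ (legitimate because $\kk[Q]$ is a domain, hence equidimensional), and the local-duality fact that, for $\dim A/\pf=j$, one has $\pf\in\operatorname{Supp}(K^j(N))$ exactly when $\pf\in\operatorname{Ass}(N)$. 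Together with $\dim N\le d-1$ this correctly reduces the theorem to: $\kk[Q]$ satisfies $(S_2)$ if and only if every associated prime of the hole module $N$ has dimension $d-1$.

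The genuine gap is the step you yourself flag as ``the hard part'': the equivalence between purity of $\operatorname{Ass}(N)$ and purity of the families of holes, which is the actual content of the theorem and is not a formal consequence of Theorem~\ref{thm:hole}. That theorem only asserts the \emph{existence} of some decomposition, with no maximality, irredundancy or uniqueness; with this literal reading of ``family of holes'' the bridge you need is even false, since one may pad any valid decomposition \eqref{eq:decomp} with a singleton family $\{q\}=(q+\ZZ\{0\})\cap\RR_{\geq 0}Q$ for any hole $q$, so every non-normal monoid (including non-normal $(S_2)$ ones, e.g.\ $\langle 3,4,5\rangle\times\ZZ_{\geq 0}$) admits a decomposition containing a $0$-dimensional family that contributes nothing to $\operatorname{Ass}(N)$. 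Completing the proof therefore requires (i) Katth\"an's actual, maximality-based notion of a family of holes, and (ii) his structure theory for $N$. One direction is an easy computation: if $\operatorname{Ann}(\xb^a)=\pf_F$, then $a+b\notin Q$ for all $b\in F$, and any $c=a+f_1-f_2\in(a+\ZZ F)\cap\RR_{\geq 0}Q$ lying in $Q$ would give $c+f_2=a+f_1\in Q$, a contradiction, so each associated prime yields a hole family of the same dimension. The converse --- that a maximal family of dimension $j$ forces an associated prime of dimension $j$, i.e.\ produces an element of $N$ whose annihilator is exactly a face prime rather than some non-prime monomial ideal --- is where the real work of \cite{K} lies, and nothing in your outline indicates how to do it. As it stands, the proposal is an accurate and nontrivial reduction, but not a proof.
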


\subsection{Ehrhart rings of lattice polytopes} 
A convex polytope $P \subset \RR^N$ is called a \textit{lattice polytope} if all of its vertices belong to $\ZZ^N$. 
If $P \not\subset \RR_{\geq 0}^N$, then we may translate $P$ by a certain lattice point $\gamma \in \ZZ_{\geq 0}^N$ to make $P+\gamma \subset \RR_{\geq 0}^N$. 
Since most properties of lattice polytopes are preserved by the traslation by a lattice point, 
we may assume that $P \subset \RR_{\geq 0}^N$ without loss of generality. 

Given a lattice polytope $P \subset \RR_{\geq 0}^N$, we can associate a homogeneous affine monoid $Q_P$ as follows: 
$$Q_P =\langle (\alpha,1) : \alpha \in P \cap \ZZ^N \rangle \subset \RR^{N+1}.$$
Hence, we can associate the $\kk$-algebra $\kk[Q_P]$, known as the \textit{toric ring} of $P$. 

On the other hand, we can also define the $\kk$-algebra associated to $P$ as follows: 
$$\Ehr_\kk(P):=\kk[{\bf x}^\alpha x_{N+1}^n : \alpha \in nP \cap \ZZ^N] \subset \kk[x_1,\ldots,x_{N+1}],$$
where $nP=\{n \alpha : \alpha \in P\}$. 
This $\kk$-algebra $\Ehr_\kk(P)$ is called the \textit{Ehrhart ring} of $P$. 
We see by definition that $\Hilb(\Ehr_\kk(P),t)$ coincides with the \textit{Ehrhart series} of $P$, 
which is the generating function $\sum_{n \geq 0}|nP \cap \ZZ^N|t^n$. 
For the introduction to Ehrhart theory, see \cite{BR}. 

Given a homogeneous affine monoid $Q \subset \ZZ_{\geq 0}^d$, a \textit{cross section polytope} of $Q$, denoted by $P_Q \subset \RR_{\geq 0}^d$, 
is the lattice polytope obtained by the intersection of $Q$ and a hyperplane including all the generators of $Q$.

\begin{ex}
The Ehrhart ring of $P$ does not necessarily coincide with the normalization of $\kk[Q_P]$. 
In fact, let $P$ be the convex hull of $\{(0,0,0),(1,1,0),(0,1,1),(1,0,1)\}$. Then we see that 
$$\kk[Q_P] = \kk[x_4,x_1x_2x_4,x_2x_3x_4,x_1x_3x_4] \cong \kk[X_1,\ldots,X_4].$$ 
In particular, the normalization $\kk[\ov{Q_P}]$ is its own. 
On the other hand, the following holds: 
\begin{align*}
\Ehr_\kk(P)&=\kk[x_4,x_1x_2x_4,x_2x_3x_4,x_1x_3x_4, \uwave{x_1x_2x_3x_4^2}] \\
&\cong \kk[X_1,\ldots,X_4,Y]/(X_1X_2X_3X_4-Y^2),
\end{align*}
where we regard $\deg X_i=1$ and $\deg Y=2$. 
\end{ex}
We say that a lattice polytope $P \subset \RR^d_{\geq 0}$ is \textit{spanning} if $\ZZ Q_P =\ZZ^{d+1}$. 
If $P$ is spanning, then we see that $\kk[\ov{Q_P}]=\Ehr_\kk(P)$. 
The notion of spanning polytopes was introduced in \cite{HKN} and the Ehrhart theory for spanning polytopes was developed there.

\smallskip

We recall a well-known notion for lattice polytopes $P \subset \RR^d$. 
The \textit{codegree of $P$}, denoted by $\codeg(P)$, is defined as follows: 
\begin{align}\label{eq:codeg}
\codeg(P):=\min\{\ell \in \ZZ_{>0} : \ell P^\circ \cap \ZZ^d \neq \emptyset\},
\end{align}
where $P^\circ$ denotes the relative interior of $P$. 

Assume that $P \subset \RR_{\geq 0}^d$ and $P$ is spanning. 
Then it is known that $$\deg(h_{\ov{Q_P}}(t))=\dim \kk[\ov{Q_P}] - \codeg(P)$$
and $$h_s=|\ell P^\circ \cap \ZZ^d|,$$
where $s=\deg (h_{\ov{Q_P}}(t))$, $h_s$ is the leading coefficient of $h_{\ov{Q_P}}(t)$ and $\ell=\codeg(P)$. 
Those are implicitly explained in \cite[Section 4]{BR} in terms of lattice polytopes or their Ehrhart series. 

\subsection{Edge rings}

Throughout this paper, all graphs are finite and simple. 
Let $G$ be a connected graph on the vertex set $V(G)$ with the edge set $E(G)$. 
Then we can associate a homogeneous affine monoid $Q_G$ by setting 
$$Q_G=\langle \eb_{u,v} : \{u,v\} \in E(G) \rangle \subset \RR^{V(G)},$$
where $\eb_u$ denotes the unit vector of $\RR^{V(G)}$ and $\eb_{u,v}=\eb_u+\eb_v$. 
We call the associated $\kk$-algebra $\kk[Q_G]$ the \textit{edge ring} of $G$. 

Let $P_G$ be the convex hull of $\{\eb_{u,v} : \{u,v\} \in E(G)\}$. 
This polytope is known as the \textit{edge polytope} of $G$. 

We know by \cite[Proposition 1.3]{OH} that 
$$\dim \kk[G] = \dim P_G+1 = \begin{cases} |V(G)|-1, &\text{if $G$ is bipartite}, \\ |V(G)|, &\text{otherwise}. \end{cases}$$

\begin{rem}\label{rem}
For a non-bipartite connected graph $G$ with $d$ vertices, the edge polytope $P_G$ is always assumed to be spanning in the following sense. 
Fix a vertex $v$ of $G$. Let $P'$ be the image of $P_G$ by the projection $\pi:\RR^{V(G)} \rightarrow \RR^{V(G) \setminus \{v\}}$ 
ignoring the entry corresponding to $v$. Then $Q_G$ is isomorphic to $Q_{P'}$ as affine monoids. 
Moreover, let $S$ be a $(d-1)$-simplex in $P_G$ described in \cite[Lemma 1.4]{OH}. 
Then we can check that $\{(\alpha,1) : \alpha \in \pi(S) \cap \ZZ^{V(G) \setminus \{v\}}\}$ forms a $\ZZ$-basis for $\ZZ^{V(G)}$ 
if we choose a vertex $v$ properly. Therefore, we conclude that $P'$ is spanning. 
Since $Q_G$ is isomorphic to $Q_{P'}$, we can claim like ``$P_G$ is spanning''. In particular, $\kk[\ov{Q_G}]=\Ehr_\kk(P_G)$ holds. 
Even if $G$ is not connected, we may apply the same procedure for each connected component $G_i$ and obtain a $\ZZ$-basis for $\ZZ^{V(G_i)}$. 
By combining these $\ZZ$-bases for $\ZZ^{V(G_i)}$ for each $i$, we obtain a $\ZZ$-basis for $\ZZ^{V(G)}$. 
Namely, $P_G$ is spanning for any non-bipartite (non-necessarily connected) graph $G$. 

A similar discussion can be applied for bipartite graphs and we can also claim the spanning property for edge polytopes of bipartite graphs, 
but we omit the detail since we do not use it in this paper.
\end{rem}

An \textit{exceptional pair} in $G$ is a pair $(C,C')$ of two odd cycles $C$ and $C'$ in $G$ such that 
$C$ and $C'$ have no common vertex and there is no bridge between $C$ and $C'$ (i.e., no edge $\{v,v'\}$ in $G$ with $v \in V(C)$ and $v' \in V(C')$). 

Edge rings have been intensively studied by several people since the following theorem was established: 
\begin{thm}[{\cite{OH, SVV}}]
Let $G$ be a connected graph. Then $\ov{Q_G}$ is described as follows: 
\begin{align}\label{eq:exc}\ov{Q_G}=Q_G+\ZZ_{\geq 0}\{\eb_C+\eb_{C'} : (C,C') \text{ is an exceptional pair in }G\},\end{align}
where $\eb_C=\sum_{v \in V(C)}\eb_v$. In particlar, $Q_G$ is normal if and only if there is no exceptional pair in $G$. 
\end{thm}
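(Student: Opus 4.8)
The plan is to verify the two inclusions of \eqref{eq:exc} separately, the inclusion $\supseteq$ being essentially immediate and $\subseteq$ resting on a combinatorial decomposition of the integer points of the cone $\RR_{\geq 0}Q_G$. Throughout I focus on the non-bipartite case, since if $G$ is bipartite there are no odd cycles, hence no exceptional pairs, and the claim reduces to the normality of $Q_G$, which drops out of the same argument.

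For $\supseteq$, the first observation is that for any odd cycle $C$ every vertex has degree $2$ in $C$, so $\sum_{e \in E(C)} \eb_e = 2\eb_C$ and therefore $\eb_C = \tfrac12\sum_{e \in E(C)} \eb_e \in \RR_{\geq 0}Q_G$. For an exceptional pair $(C,C')$ the vector $\eb_C + \eb_{C'}$ has coordinate sum $|V(C)| + |V(C')|$, which is even, so it lies in $\ZZ Q_G$ (for non-bipartite connected $G$ one checks that $\ZZ Q_G$ is exactly the set of integer vectors of even coordinate sum). Hence $\eb_C + \eb_{C'} \in \ZZ Q_G \cap \RR_{\geq 0}Q_G = \ov{Q_G}$, and since $\ov{Q_G}$ is a monoid containing $Q_G$, the inclusion $\supseteq$ follows.

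For $\subseteq$, I would take $\alpha \in \ov{Q_G}$ and consider the fiber polytope $\Pc_\alpha = \{r \in \RR_{\geq 0}^{E(G)} : \sum_{e \ni v} r_e = \alpha_v \text{ for all } v \in V(G)\}$, which is nonempty because $\alpha \in \RR_{\geq 0}Q_G$ and bounded because a nonnegative vector in the kernel of the incidence matrix must vanish. The crucial step is to pick an extreme point $r^*$ of $\Pc_\alpha$ and invoke the classical half-integrality theorem for fractional $b$-matchings: $r^*$ is half-integral and the edges with $r^*_e \notin \ZZ$ decompose into vertex-disjoint odd cycles $C_1,\ldots,C_p$, each carrying the value $\tfrac12$ on all its edges. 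Separating off the integer-valued edges into $\beta_0 := \sum_{r^*_e \in \ZZ} r^*_e\,\eb_e \in Q_G$ and applying the identity $\eb_{C_j} = \tfrac12\sum_{e \in E(C_j)}\eb_e$ then gives $\alpha = \beta_0 + \eb_{C_1} + \cdots + \eb_{C_p}$.

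It remains to organise the cycles $C_1,\ldots,C_p$ into exceptional pairs. Comparing coordinate sums ($\alpha$ and $\beta_0$ even, each $\eb_{C_j}$ odd) forces $p$ to be even, and I would pair the cycles arbitrarily into $p/2$ pairs. The key subclaim is that two vertex-disjoint odd cycles $C,C'$ joined by a bridge satisfy $\eb_C+\eb_{C'}\in Q_G$: deleting the bridge endpoint from each cycle leaves an even path, which has a perfect matching inside the cycle, and adjoining the bridge produces a perfect matching of $V(C)\cup V(C')$ by edges of $G$, realising $\eb_C+\eb_{C'}$ as a sum of edge vectors. Thus each pair is either exceptional, contributing one of the generators on the right of \eqref{eq:exc}, or is joined by a bridge, in which case its contribution is absorbed into the $Q_G$-part; this yields $\subseteq$. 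The normality statement then follows, since for an exceptional pair $(C,C')$ a representation $\eb_C+\eb_{C'}=\sum n_e\eb_e$ in $Q_G$ would be a perfect matching of $V(C)\cup V(C')$ in $G$, and the absence of a bridge forces it to restrict to $V(C)$ alone, which is impossible as $|V(C)|$ is odd. The main obstacle is the half-integrality structure theorem: confirming that the fractional support of an extreme point of $\Pc_\alpha$ consists of vertex-disjoint odd cycles with all values $\tfrac12$ is exactly where the odd-cycle combinatorics of the non-bipartite incidence matrix is used, with everything else reducing to parity and elementary matching constructions.
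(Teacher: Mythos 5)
The paper itself gives no proof of this theorem --- it is quoted from \cite{OH, SVV} --- so there is no internal argument to compare against. Your proof is essentially the standard polyhedral/matching-theoretic one, and its architecture is sound: the inclusion $\supseteq$ via the even-coordinate-sum description of $\ZZ Q_G$ and $\eb_C=\tfrac12\sum_{e\in E(C)}\eb_e$; the inclusion $\subseteq$ via an extreme point of the fiber polytope $\Pc_\alpha$ whose fractional support consists of vertex-disjoint odd cycles; the parity argument forcing an even number $p$ of such cycles; the observation that an arbitrary pairing suffices because a bridged pair satisfies $\eb_C+\eb_{C'}\in Q_G$ (even path plus bridge gives a perfect matching); and the non-normality direction, where a representation of $\eb_C+\eb_{C'}$ in $Q_G$ would force a perfect matching on the odd set $V(C)$. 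This is close in spirit to the original arguments of Ohsugi--Hibi and Simis--Vasconcelos--Villarreal, which also rest on decomposing elements of $\ov{Q_G}$ into edges plus pairs of vertex-disjoint odd cycles (``bow-ties''); your route packages that decomposition via linear-programming extreme points rather than direct manipulation of walks.

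One step is misstated and, as written, produces a false identity. The half-integrality theorem you invoke holds in the form ``all values equal to $\tfrac12$'' only for right-hand side $\mathbf{1}$ (the fractional matching polytope). For a general integral $\alpha$, an extreme point $r^*$ of $\Pc_\alpha$ is half-integral and its fractionally-valued edges do form vertex-disjoint odd cycles (the support decomposes into trees and odd-unicyclic components, and along any cycle consecutive edges satisfy $r^*_{e_{i-1}}+r^*_{e_i}\in\ZZ$, so the fractional part is constant on each cycle), but the values are odd multiples of $\tfrac12$, not necessarily $\tfrac12$. Concretely, let $G$ be two triangles on $\{1,2,3\}$ and $\{4,5,6\}$ joined by the bridge $\{3,4\}$, and $\alpha=(3,3,3,1,1,1)\in\ov{Q_G}$: the extreme point with bridge value $0$ carries $\tfrac32$ on every edge of the first triangle, so your formula $\alpha=\beta_0+\eb_{C_1}+\cdots+\eb_{C_p}$ with $\beta_0=\sum_{r^*_e\in\ZZ}r^*_e\,\eb_e$ fails (it would give $(1,1,1,1,1,1)$). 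The repair is one line and stays inside your framework: on each fractional cycle write $r^*_e=\bigl(r^*_e-\tfrac12\bigr)+\tfrac12$ with $r^*_e-\tfrac12\in\ZZ_{\geq 0}$, absorb these integer parts into $\beta_0\in Q_G$, and the decomposition $\alpha=\beta_0+\eb_{C_1}+\cdots+\eb_{C_p}$ holds. With that correction the proof is complete.
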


We prepare the following proposition for the proof later. 
\begin{prop}\label{prop:deg_edge}
For a connected graph with $d$ vertices, we have $\deg (h_{\ov{Q_G}}(t)) \leq d/2$. 
Moreover, if the equality holds, then $h_{d/2}=1$, where $h_{d/2}$ is the leading coefficient of $h_{\ov{Q_G}}(t)$.  
\end{prop}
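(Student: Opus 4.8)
The plan is to convert the statement into a lower bound on the codegree of the edge polytope $P_G$ and then exploit the fact that interior lattice points of dilations of $P_G$ must have all coordinates positive. First I would invoke Remark~\ref{rem}, which guarantees that $P_G$ is spanning, so that $\kk[\ov{Q_G}]=\Ehr_\kk(P_G)$ and the formula recorded after \eqref{eq:codeg} applies:
\[
\deg(h_{\ov{Q_G}}(t)) = \dim \kk[\ov{Q_G}] - \codeg(P_G).
\]
Since normalization preserves Krull dimension, $\dim\kk[\ov{Q_G}]=\dim\kk[Q_G]$, which is $d-1$ when $G$ is bipartite and $d$ otherwise. Thus it suffices to prove the single inequality $\codeg(P_G)\geq d/2$: in the non-bipartite case this gives $\deg(h_{\ov{Q_G}}(t))\leq d - d/2 = d/2$, while in the bipartite case it gives the strictly smaller bound $d/2-1$, so in either case the desired inequality holds.

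The crux, and the step I expect to carry the real content, is a positivity observation for the relative interior of $\ell P_G$. Every generator $\eb_{u,v}$ of $P_G$ is nonnegative, so for each vertex $v\in V(G)$ the inequality $x_v\geq 0$ is valid on $P_G$. Because $G$ is connected there is an edge incident to $v$, and the corresponding generator satisfies $x_v=2>0$; hence $P_G$ is \emph{not} contained in the hyperplane $\{x_v=0\}$. By the standard characterization of the relative interior of a polytope, every non-implicit valid inequality is satisfied strictly there, so each point of $(\ell P_G)^\circ$ has $x_v>0$ for all $v$. Consequently any $p=(p_1,\dots,p_d)\in(\ell P_G)^\circ\cap\ZZ^d$ satisfies $p_v\geq 1$ for every $v$. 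Since $P_G$ lies in the hyperplane $\{\sum_v x_v=2\}$, such a $p$ also satisfies $\sum_v p_v=2\ell$, and combining this with $p_v\geq 1$ forces $2\ell\geq d$, i.e.\ $\codeg(P_G)\geq d/2$ by \eqref{eq:codeg}. The only subtlety here is confirming that $x_v\geq 0$ is genuinely non-implicit (which is exactly where connectivity enters) and correctly bookkeeping the bipartite versus non-bipartite dimension.

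Finally, for the equality statement I would argue as follows. If $\deg(h_{\ov{Q_G}}(t))=d/2$, then the bipartite bound $d/2-1$ cannot be attained, so $G$ must be non-bipartite, $d$ must be even, and $\codeg(P_G)=d/2$ exactly. Applying the leading-coefficient formula recorded after \eqref{eq:codeg} with $\ell=\codeg(P_G)=d/2$ and $s=\deg(h_{\ov{Q_G}}(t))=d/2$ yields $h_{d/2}=|(d/2)\,P_G^\circ\cap\ZZ^d|$. By the positivity observation above, any interior lattice point $p$ of $(d/2)P_G$ satisfies $p_v\geq 1$ for all $v$ together with $\sum_v p_v=2\cdot(d/2)=d$; having $d$ coordinates summing to $d$ with each at least $1$ forces $p=(1,\dots,1)$. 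Hence the interior lattice point is unique whenever it exists, and since $\codeg(P_G)=d/2$ guarantees existence, I conclude $h_{d/2}=1$.
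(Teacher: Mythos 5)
Your proof is correct and follows essentially the same route as the paper: both reduce the statement to $\codeg(P_G) \geq d/2$ via Remark~\ref{rem} and the formula $\deg(h_{\ov{Q_G}}(t)) = \dim \kk[\ov{Q_G}] - \codeg(P_G)$, both show that any lattice point of $(\ell P_G)^\circ$ has all coordinates at least $1$ (the paper by writing the point as a combination $\sum a_{u,v}\eb_{u,v}$ with all $a_{u,v}>0$, you by strictness of non-implicit valid inequalities $x_v \geq 0$ on the relative interior --- equivalent standard facts), and both sum coordinates to get $2\ell \geq d$ and force the unique interior point $(1,\dots,1)$ in the equality case. The only slip is harmless: a generator $\eb_{u,v}$ incident to $v$ has $v$-coordinate $1$, not $2$, though all that matters is positivity.
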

\begin{proof}
For the first statement, it is enough to show that $\codeg(P_G) \geq d/2$ by \eqref{eq:codeg} and Remark~\ref{rem}. 

Let $\ell$ be a positive integer and assume that $\ell P_G^\circ \cap \ZZ^d \neq \emptyset$. 
Let $\alpha=(\alpha_v)_{v \in V(G)} \in \ell P_G^\circ \cap \ZZ^d$. 
Then we can write $\alpha$ like $\alpha=\sum_{\{u,v\} \in E(G)}a_{u,v}\eb_{u,v}$, 
where $a_{u,v}>0$ for each $\{u,v\} \in E(G)$ and $\sum a_{u,v}=\ell$. 
Thus, in particular, $\alpha_u \geq 1$ holds for each $u \in V(G)$ since $\alpha \in \ZZ^d$. 
Hence, $$2\ell =2\sum a_{u,v}=\sum_{v \in V(G)}\alpha_v \geq |V(G)| = d.$$ This means that $\codeg(P_G) \geq d/2$. 

Moreover, by this discussion, if $\codeg(P_G)=d/2$, then $\alpha$ must be $\sum_{v \in V(G)}\eb_u$. This implies that $h_{d/2}=1$. 
\end{proof}

\medskip

\section{Proof of Theorem~\ref{main1}}\label{sec:main1}

This section is devoted to giving a proof of Theorem~\ref{main1}. 

\begin{proof}[Proof of Theorem~\ref{main1}]
Let $Q \subset \ZZ_{\geq 0}^d$ be a homogeneous affine monoid of dimension $d$. 
Then we know the following short exact sequence of graded $\kk[Q]$-modules: 
\begin{align*}
0 \longrightarrow \kk[Q] \longrightarrow \kk[\ov{Q}] \longrightarrow \kk[\ov{Q}]/\kk[Q] \longrightarrow 0
\end{align*}
Thus, we have \begin{align}\label{eq:sum}\Hilb(Q,t)=\Hilb(\ov{Q},t) - \Hilb(\kk[\ov{Q}]/\kk[Q],t).\end{align} 

By the way, Theorem~\ref{thm:hole} describes the structure of $\kk[\ov{Q}]/\kk[Q]$. 
Since we assume $(S_2)$ for $\kk[Q]$, we know that each family of holes is of dimension $d-1$ (see Theorem~\ref{thm:S2}). 
Let us consider the decomposition of $\ov{Q} \setminus Q$ in \eqref{eq:decomp}, where $q_i \in \ov{Q}$ and $F_i$ is a face of $Q$ of dimension $d-1$ for each $i$. 
Note that this decomposition is not necessarily disjoint. Hence, we have to apply an ``inclusion-exclusion type'' formula to get the Hilbert series of $\kk[\ov{Q}]/\kk[Q]$. 

Here, we recall some materials on hyperplanes arrangements. (See, e.g., \cite{S} for the introduction to the theory of hyperplane arrangements.) 
Let $H_i=q_i+\RR F_i$ be an affine hyperplane, and let $\Ac=\{H_1,\ldots,H_\ell\}$ be a hyperplane arrangement. 
Let $\Lc(\Ac)$ denote the intersection lattice of $\Ac$, i.e., $$\Lc(\Ac)=\left\{ \bigcap_{i \in I} H_i \neq \emptyset : I \subset \{1,\ldots,\ell\} \right\}$$ 
equipped with a partial order defined by reverse inclusion. 
We regard $\bigcap_{i \in \emptyset}H_i$ as $\RR^d$, i.e., $\RR^d \in \Lc(\Ac)$ is the minimal element. 
We define a map $\mu : \Lc(\Ac) \rightarrow \ZZ$ (known as the \textit{M\"obius function}) as follows: 
$$\mu(X)=\begin{cases}
1, \; &\text{ if }X=\RR^d, \\
-\sum_{X \subsetneq Y}\mu(Y), &\text{ otherwise}. 
\end{cases}$$

Then we see from \eqref{eq:decomp} that 
\begin{align*}
\Hilb(\kk[\ov{Q}]/\kk[Q],t)&=-\sum_{X \in \Lc(\Ac) \setminus \RR^d} \mu(X)\frac{t^{a_i}h_{\ov{X}}(t)}{(1-t)^{\dim X}} \\
&=-\sum_{X \in \Lc(\Ac) \setminus \RR^d} \mu(X)\frac{t^{a_i}h_{\ov{X}}(t)(1-t)^{d-\dim X}}{(1-t)^d},
\end{align*}
where $a_i=\deg \xb^{q_i}$ and we let $\ov{X}=X \cap \ov{Q}$. Note that $\ov{X}$ is a normal homogeneous submonoid of $\ov{Q}$. 
Hence, it follows from \eqref{eq:sum} that 
$$h_Q(t)=h_{\ov{Q}}(t)+\sum_{X \in \Lc(\Ac)}(-1)^{d - \dim X}\mu(X)t^{a_i}(t-1)^{d - \dim X}h_{\ov{X}}(t).$$
Since $\ov{X}$ is normal, we know that $\kk[\ov{X}]$ is Cohen--Macaulay, so $h_{\ov{X}}(t)$ has positive coefficients. 
Thus, we can conclude the desired conclusion $\deg (h_Q(t)) \geq \deg (h_{\ov{Q}}(t))$ once we check that $(-1)^{d- \dim X}\mu(X) \geq 0$. 
Actually, this is known to be true. See, e.g., \cite[Corollary 3.4]{S}. 
\end{proof}

Regarding another relationship between $h_Q(t)$ and $h_{\ov{Q}}(t)$, we know the following: 
\begin{prop}\label{prop:mul}
For any homogeneous affine monoid $Q$ of dimension $d$, $h_Q(1)=h_{\ov{Q}}(1)$ holds. 
\end{prop}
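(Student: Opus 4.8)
The plan is to exploit the fact that $h_Q(1)$ and $h_{\ov{Q}}(1)$ both compute the multiplicity (normalized leading Ehrhart coefficient) of the respective algebras. Since $\kk[Q]$ and $\kk[\ov{Q}]$ have the same dimension $d$ and the same normalization, they share the same fraction field and hence the same multiplicity. Concretely, write $\Hilb(Q,t)=h_Q(t)/(1-t)^d$ and $\Hilb(\ov Q,t)=h_{\ov Q}(t)/(1-t)^d$; evaluating $h(t)$ at $t=1$ recovers (up to the universal factor $d!$) the leading term of the Hilbert polynomial, i.e. the multiplicity $e(\kk[Q])$ and $e(\kk[\ov{Q}])$. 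So the statement $h_Q(1)=h_{\ov{Q}}(1)$ is equivalent to $e(\kk[Q])=e(\kk[\ov{Q}])$.

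First I would make this identification precise: for a homogeneous $\kk$-algebra $R$ of dimension $d$, the leading coefficient of the Hilbert polynomial equals $h_R(1)/(d-1)!$ when $h_R(1)\neq 0$, and more robustly $h_R(1)=e(R)$ is the multiplicity. The cleanest route is to use the exact sequence already exhibited in the proof of Theorem~\ref{main1}, namely
\begin{align*}
0 \longrightarrow \kk[Q] \longrightarrow \kk[\ov{Q}] \longrightarrow \kk[\ov{Q}]/\kk[Q] \longrightarrow 0,
\end{align*}
which gives $\Hilb(\ov{Q},t)-\Hilb(Q,t)=\Hilb(\kk[\ov{Q}]/\kk[Q],t)$. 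It therefore suffices to show that the module $M:=\kk[\ov{Q}]/\kk[Q]$ has dimension strictly less than $d$, for then its $h$-numerator is divisible by a lower power of $(1-t)$ and contributes $0$ to the difference $h_{\ov{Q}}(1)-h_Q(1)$.

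Next I would establish $\dim M < d$. Since $\kk[\ov{Q}]$ is the normalization of the domain $\kk[Q]$, the two rings agree after localizing at the generic point, so $M$ is supported on a proper closed subset; geometrically $\ov{Q}\setminus Q$ is contained in a finite union of translates of $\ZZ F_i$ intersected with $\RR_{\geq 0}Q$ by Theorem~\ref{thm:hole}, each of which spans an affine subspace of dimension $\dim F_i\leq d-1$. Hence $\dim_\kk M_n$ grows like $n^{d-2}$ (or slower), i.e. $M$ has Krull dimension at most $d-1$, so $\Hilb(M,t)=g(t)/(1-t)^{d-1}$ for some $g\in\ZZ[t,t^{-1}]$. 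Multiplying through by $(1-t)^d$ yields $h_{\ov{Q}}(t)-h_Q(t)=(1-t)g(t)$, and evaluating at $t=1$ gives the claim immediately.

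I expect the main obstacle to be the bookkeeping around the case $h_R(1)=0$ versus $h_R(1)\neq 0$ and, relatedly, making the dimension count for $M$ fully rigorous rather than heuristic. One must rule out that $\ov{Q}\setminus Q$ could carry full-dimensional growth; this is exactly controlled by the decomposition \eqref{eq:decomp}, where every family of holes lives in $\ZZ F_i$ with $F_i$ a proper face, so $\dim F_i\leq d-1$. Note that this argument, unlike Theorem~\ref{main1}, does \emph{not} require the $(S_2)$ hypothesis: we only use that $\kk[Q]$ is a domain with normalization $\kk[\ov{Q}]$ of the same dimension, so the quotient module is a torsion module of lower dimension. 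The divisibility of $h_{\ov{Q}}(t)-h_Q(t)$ by $(1-t)$ is then the entire content, and the value at $t=1$ follows at once.
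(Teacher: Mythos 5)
Your proof is correct and follows essentially the same route as the paper: both use the short exact sequence $0 \to \kk[Q] \to \kk[\ov{Q}] \to \kk[\ov{Q}]/\kk[Q] \to 0$ together with the hole decomposition of Theorem~\ref{thm:hole} to bound the dimension of the hole module by $d-1$, write its Hilbert series with denominator $(1-t)^{d-1}$, and conclude that $h_{\ov{Q}}(t)-h_Q(t)$ is divisible by $(1-t)$. The opening reformulation via multiplicities and the torsion-module remark are harmless additional framing, but the core argument coincides with the paper's.
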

\begin{proof}
Since the structure of $\kk[\ov{Q}]/\kk[Q]$ can be captured by the decomposition \eqref{eq:decomp} and each face $F_i$ is of dimension at most $d-1$, 
we see that $$\Hilb(\kk[\ov{Q}]/\kk[Q],t)=\frac{g(t)}{(1-t)^{d-1}},$$
where $g(t)$ is some polynomial in $t$. Then it follows from \eqref{eq:sum} that 
\begin{align*}
\Hilb(\kk[\ov{Q}],t)-\Hilb(\kk[Q],t)=\frac{h_{\ov{Q}}(t)-h_Q(t)}{(1-t)^d}=\frac{(1-t)g(t)}{(1-t)^d}. 
\end{align*}
In particular, $h_{\ov{Q}}(t)-h_Q(t)$ is divisible by $(1-t)$. This means that $h_{\ov{Q}}(1)=h_Q(1)$. 
\end{proof}

\medskip

\section{Proof of Theorem~\ref{main2}}\label{sec:main2}

In this section, we give a proof of Theorem~\ref{main2}. 
For the proof of Theorem~\ref{main2}, we use the following example. 
\begin{ex}\label{ex:Q}
Let $G$ be the graph on $\{1,\ldots,10\}$ with the edge set 
$$E(G)=\{12,13,23,34,35,36,37,45,67,58,78,89, 8 \, 10,9 \, 10\}. $$
See Figure~\ref{graph1}. 
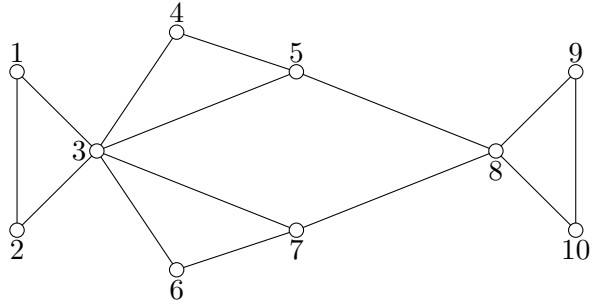
\begin{figure}[h]
\centering
\begin{tikzpicture}[scale=1.05]
\coordinate(3)at(0,0);
\coordinate(1)at(-1,1);
\coordinate(2)at(-1,-1);
\coordinate(4)at(1,1.5);
\coordinate(6)at(1,-1.5);
\coordinate(5)at(2.5,1);
\coordinate(7)at(2.5,-1);
\coordinate(8)at(5,0);
\coordinate(9)at(6,1);
\coordinate(10)at(6,-1);
\draw(7)--(8)--(10)--(9)--(8)--(5)--(3)--cycle;
\draw(5)--(4)--(3)--(1)--(2)--(3)--(6)--(7);
\foreach \i in {1,2,...,10}{
\filldraw[fill=white](\i)circle(0.09);
}
\foreach \i in {2,6,7,8,10}{
\draw(\i)node[below]{\i};
}
\foreach \i in {1,4,5,9}{
\draw(\i)node[above]{\i};
}
\draw(3)node[left]{3};
\end{tikzpicture}
\caption{A graph $G$ with $\deg (h_{Q_G}(t)) < \deg (h_{\ov{Q_G}}(t))$}\label{graph1}
\end{figure}

Let $Q=Q_G$. By using {\tt Macaulay2} (for $\kk[Q]$) together with {\tt Normaliz} (for $\kk[\ov{Q}]$), we see the following: 
\begin{align*}
\Hilb(Q,t)=\frac{1+4t+9t^2+12t^3+8t^4}{(1-t)^{10}} \;\;\text{and}\;\; \Hilb(\ov{Q},t)=\frac{1+4t+9t^2+13t^3+6t^4+t^5}{(1-t)^{10}}. 
\end{align*}
On the other hand, we see that $$\dim \Ext^5_R(\kk[Q],A)=8>14-2-5, \;\; 5>14-\dim \kk[Q],$$
where we let $A=\kk[x_1,\ldots,x_{14}]$ and regard $\kk[Q]$ as $A/I$ by taking the defining ideal $I \subset A$ of $\kk[Q]$ (i.e., the toric ideal of $Q$). 
This implies that $\kk[Q]$ does not satisfy $(S_2)$ (see Proposition~\ref{prop:dao}). 
Note that we can also check non-$(S_2)$-ness by using the structure of the hole module $\kk[\ov{Q}]/\kk[Q]$. See Proposition~\ref{prop:Q_k}. 

This example shows that Theorem~\ref{main1} does not hold if we drop the assumption $(S_2)$. 
\end{ex}
Actually, we can generalize this example as follows: 
\begin{prop}\label{prop:Q_k}
Given a positive integer $k$, let $G_k$ be the graph as depicted in Figure~\ref{graph2}: \vspace{-0.5cm}
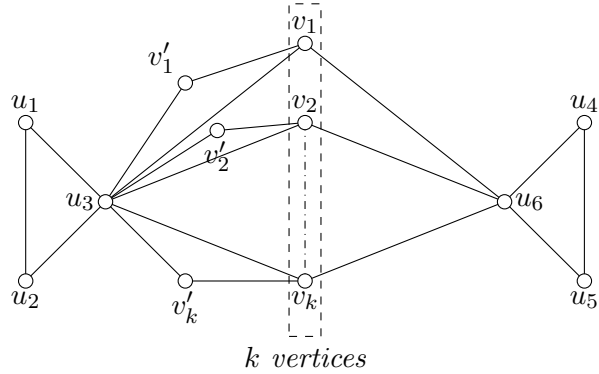
\begin{figure}[h]
\centering
\begin{tikzpicture}[scale=1.05]
\coordinate(3)at(0,0);
\coordinate(1)at(-1,1);
\coordinate(2)at(-1,-1);
\coordinate(4)at(1,1.5);
\coordinate(6)at(1,-1);
\coordinate(5)at(2.5,1);
\coordinate(12)at(2.5,2);
\coordinate(7)at(2.5,-1);
\coordinate(8)at(5,0);
\coordinate(9)at(6,1);
\coordinate(10)at(6,-1);
\coordinate(11)at(1.4,0.9);
\draw(7)--(8)--(10)--(9)--(8)--(5)--(3)--cycle;
\draw(4)--(3)--(1)--(2)--(3)--(6)--(7);
\draw(3)--(12)--(8);
\draw(3)--(11)--(5);
\draw(4)--(12);
\draw[dash dot](5)--(7);
\foreach \i in {1,2,...,12}{
\filldraw[fill=white](\i)circle(0.09);
}
\draw(1)node[above]{$u_{1}$};
\draw(2)node[below]{$u_{2}$};
\draw(3)node[left]{$u_{3}$};
\draw(4)node[above left]{$v_{1}'$};
\draw(5)node[above]{$v_{2}$};
\draw(6)node[below]{$v_{k}'$};
\draw(7)node[below]{$v_{k}$};
\draw(8)node[right]{$u_{6}$};
\draw(9)node[above]{$u_{4}$};
\draw(10)node[below]{$u_{5}$};
\draw(11)node[below]{$v_{2}'$};
\draw(12)node[above]{$v_{1}$};
\draw(2.5,-1.7)node[below]{$k$ vertices};
\draw[dashed](2.3,-1.7)--(2.7,-1.7)--(2.7,2.5)--(2.3,2.5)--cycle;
\end{tikzpicture}
\caption{Graph $G_k$}\label{graph2}
\end{figure}

\vspace{-0.4cm}

\noindent
Let $Q =Q_{G_k}$. Then 
\begin{align}\label{eq:con}\deg (h_Q(t)) - \deg (h_{\ov{Q}}(t))=
\begin{cases} -1 \;\;&\text{if $k$ is even}, \\
0 &\text{if $k$ is odd}. 
\end{cases}
\end{align}
We also have $\dep \kk[Q]=k+7$, while $\dim \kk[Q]=2k+6$. 
In particular, $\kk[Q]$ is never Cohen--Macaulay if $k \geq 2$. 
\end{prop}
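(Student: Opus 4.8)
The statement of Proposition~\ref{prop:Q_k} has three distinct assertions about the family $G_k$: the degree comparison \eqref{eq:con}, the depth formula $\dep \kk[Q] = k+7$, and the dimension count $\dim\kk[Q]=2k+6$ (from which the failure of Cohen--Macaulayness for $k\geq 2$ is immediate). I would dispose of the dimension first, since it is the cheapest: $G_k$ is a non-bipartite connected graph (it contains the odd cycle $u_1u_2u_3$), so by the formula cited from \cite[Proposition 1.3]{OH} after Remark~\ref{rem} we have $\dim\kk[Q]=|V(G_k)|$. Reading off the vertex count from Figure~\ref{graph2}—the six fixed vertices $u_1,\dots,u_6$ together with the $2k$ vertices $v_1,\dots,v_k,v_1',\dots,v_k'$—gives $|V(G_k)|=2k+6$, as claimed. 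Once $\dep\kk[Q]=k+7<2k+6=\dim\kk[Q]$ is established for $k\geq 2$, non-Cohen--Macaulayness follows because Cohen--Macaulay means $\dep=\dim$.

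\textbf{The depth.} For $\dep\kk[Q]=k+7$ I would invoke the second half of Theorem~\ref{thm:hole}: if $\ov Q\setminus Q$ is a single family of holes $q+\ZZ F$ intersected with $\RR_{\geq 0}Q$, then $\dep\kk[Q]=\dim F+1$. So the task reduces to (i) identifying the exceptional pairs of $G_k$ via \eqref{eq:exc} and checking that the holes assemble into exactly one family $q+\ZZ F$, and (ii) computing $\dim F$. The intended geometry is that $G_k$ contains precisely one exceptional pair, built from the two odd cycles at the ends (the triangle $u_1u_2u_3$ on the left and the triangle $u_4u_5u_6$ on the right), which have no common vertex and—crucially—no connecting bridge because every path between the two triangles must pass through the long $v$-ladder in the dashed box. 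I would verify these two cycles are genuinely the only odd cycles giving an exceptional pair and that all resulting holes lie on a single affine subspace $q+\RR F$, then compute the rank of $\ZZ F$ to get $\dim F=k+6$, yielding $\dep=k+7$. The honest bookkeeping here—confirming no other exceptional pairs arise and pinning down $F$—is where I expect the most care to be needed; this is the main obstacle.

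\textbf{The degree comparison.} For \eqref{eq:con} the natural route is Ehrhart-theoretic. Since $G_k$ is non-bipartite, Remark~\ref{rem} gives $\kk[\ov{Q}]=\Ehr_\kk(P_{G_k})$, so $\deg(h_{\ov Q}(t))=\dim\kk[\ov Q]-\codeg(P_{G_k})$ by the codegree formula recalled after \eqref{eq:codeg}. With $\dim\kk[\ov Q]=\dim\kk[Q]=2k+6$, computing $\deg(h_{\ov Q}(t))$ amounts to finding the smallest $\ell$ with $\ell P_{G_k}^\circ\cap\ZZ^{2k+6}\neq\emptyset$; I would carry out the vertex-weight argument exactly as in the proof of Proposition~\ref{prop:deg_edge}, which already shows $\codeg\geq d/2=k+3$, and then determine whether equality or $\codeg=k+4$ holds depending on the parity of $k$. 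For $\deg(h_Q(t))$ I would use Proposition~\ref{prop:mul} together with the fact that the only family of holes has dimension $\dim F=k+6=d-1$ fails (since $d-1=2k+5\neq k+6$ once $k\geq 2$), reflecting that the hole family has codimension larger than one, consistent with the failure of $(S_2)$ noted via Theorem~\ref{thm:S2}.

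\textbf{Assembling the parities.} The final step is to combine the computed values of $\deg(h_{\ov Q}(t))$ and $\deg(h_Q(t))$ and read off the difference. The parity split in \eqref{eq:con} should emerge from whether the candidate interior lattice point of the dilated cross-section exists at level $k+3$ or only at $k+4$, which in turn depends on whether the ladder of $v$-vertices admits a consistent integral half-weighting—an phenomenon governed by the parity of $k$. I would present the two cases $k$ even and $k$ odd in parallel, checking each against the explicit Hilbert series already recorded in Example~\ref{ex:Q} for the case $k=2$ (where the difference is $-1$, matching the even branch) as a sanity check that the general computation is correctly normalized.
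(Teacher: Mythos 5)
Your plan for the dimension and the depth follows the paper's route: the dimension comes from non-bipartiteness and $|V(G_k)|=2k+6$, and the depth comes from the second half of Theorem~\ref{thm:hole} once the holes are shown to form a single family $q+\ZZ F$ along a face $F$ of $Q$ with $\dim F=k+6$. But you explicitly defer that identification ("the main obstacle") rather than carry it out. The paper does it concretely: $F=Q'=Q_{G'}$, where $G'$ is the graph obtained from $G_k$ by deleting $v_1,\ldots,v_k$ (so $G'$ is the two triangles plus the $k$ edges $\{u_3,v_i'\}$), $q=\sum_{i=1}^6\eb_{u_i}$, and one proves $\ov{Q}\setminus Q=q+Q'$ by two explicit inclusions using \eqref{eq:exc} together with a parity argument on $\alpha''_{u_4}+\alpha''_{u_5}+\alpha''_{u_6}$. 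So this part of your write-up is a correct plan but not yet a proof.

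The genuine gap is in the degree comparison, which is the heart of \eqref{eq:con}. You locate the parity split in the codegree of $P_{G_k}$ ("determine whether equality or $\codeg=k+4$ holds depending on the parity of $k$", and again "whether the candidate interior lattice point \dots exists at level $k+3$ or only at $k+4$"). This mechanism is wrong: the codegree equals $k+3$ for \emph{every} $k$. The paper exhibits an explicit interior lattice point of $(k+3)P_{G_k}$, namely $\sum_{v\in V(G_k)}\eb_v$ written as a positive combination of all edge vectors with coefficient sum $k+3$, so $\deg(h_{\ov{Q}}(t))=k+3$ with leading coefficient $1$, independently of the parity of $k$. Moreover, you give no mechanism at all for computing $\deg(h_Q(t))$: Proposition~\ref{prop:mul} only yields $h_Q(1)=h_{\ov{Q}}(1)$ and cannot control degrees. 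The missing idea is to extract the Hilbert series of the hole module from the structure of $\ov{Q}\setminus Q$: since the $k+6$ edge vectors of $G'$ are linearly independent, $\kk[Q']$ is a polynomial ring in $k+6$ variables, and since $q$ has degree $3$, one gets $\Hilb(\kk[\ov{Q}]/\kk[Q],t)=t^3/(1-t)^{k+6}$, hence by \eqref{eq:sum}
$$h_Q(t)=h_{\ov{Q}}(t)-t^3(1-t)^k.$$
The parity of $k$ enters only here: the top term of $t^3(1-t)^k$ is $(-1)^kt^{k+3}$, so for $k$ even it cancels the leading term $t^{k+3}$ of $h_{\ov{Q}}(t)$ and the degree drops to $k+2$, while for $k$ odd the two leading terms add. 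Without this computation your outline cannot produce \eqref{eq:con}.
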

\begin{proof}
Note that $\dim \kk[Q] = \dim \kk[\ov{Q}] = |V(G_k)|=2k+6$. 
Let $P=P_{G_k}$ be the edge polytope of $G_k$.

\noindent
\underline{The first step}: We compute $\deg(h_{\ov{Q}}(t))$ by computing the codegree of $P$. 
We see that 
\begin{align*}
&\frac{3}{4}\eb_{u_1,u_2}+\frac{1}{4}(\eb_{u_1,u_3}+\eb_{u_2,u_3})+\frac{7}{12}\eb_{u_4,u_5}+\frac{5}{12}(\eb_{u_4,u_6}+\eb_{u_5,u_6}) \\
&+\sum_{i=1}^k\frac{1}{6k}\left(\eb_{v_i,u_3}+\eb_{v_i,u_6}+2\eb_{v_i',u_3}+(6k-2)\eb_{v_i,v_i'}\right) =\sum_{v \in V(G_k)}\eb_v \in \RR^{V(G_k)}, 
\end{align*}
where $\eb_{u,v}=\eb_u+\eb_v$. 
In the left-hand side of this equality, all edges of $G_k$ appear and each of the coefficients is positive. 
Moreover the sum of the coefficients is equal to $k+3$. 
This implies that $(k+3)P^\circ \cap \ZZ^d \neq \emptyset$, i.e., $\codeg P \leq k+3$, 
which implies that $\deg (h_Q(t)) =\dim \kk[\ov{Q}] - \codeg P \geq k+3$. 
On the other hand, Proposition~\ref{prop:deg_edge} implies that $\deg (h_{\ov{Q}}(t)) \leq (2k+6)/2=k+3$. Therefore, $\deg (h_{\ov{Q}}(t))=k+3$. 
At the same time, we can also see that $h_{k+3}=1$, where $h_{k+3}$ denotes the leading coefficient of $h_{\ov{Q}}(t)$. 

\smallskip

\noindent
\underline{The second step}: 
Next, we compute the family of holes of $Q$. 
Let $G'$ be the (non-connected) graph obtained from $G_k$ by removing the vertices $v_1,\ldots,v_k$ together with the incident $3k$ edges. 
Let $q=\sum_{i=1}^6 \eb_{u_i}$ and let $Q'=Q_{G'}$. We claim that 
\begin{align}\label{eq:Q_k}\ov{Q} \setminus Q = q+Q'. \end{align}
Note that the pair of $3$-cycles $(u_1,u_2,u_3)$ and $(u_4,u_5,u_6)$ is a unique exceptional pair in $G_k$.

\noindent
``$(\subset)$'' By \eqref{eq:exc}, we have $\ov{Q} \setminus Q \subset Q + \ZZ_{\geq 0}q$, 
but we know that $$2q = \eb_{u_1,u_2}+\eb_{u_1,u_3}+\eb_{u_2,u_3}+\eb_{u_4,u_5}+\eb_{u_4,u_6}+\eb_{u_5,u_6} \in Q.$$
Hence, $\ov{Q} \setminus Q \subset q+Q$ holds. 
Moreover, for each $i=1,\ldots,k$, we see the following: 
\begin{align*}
q+\eb_{v_i,u_3}&=\eb_{u_1,u_3}+\eb_{u_2,u_3}+\eb_{v_i,u_6}+\eb_{u_4,u_5}, \\
q+\eb_{v_i,u_6}&=\eb_{u_1,u_2}+\eb_{v_i,u_3}+\eb_{u_4,u_6}+\eb_{u_5,u_6}, \text{ and}\\
q+\eb_{v_i,v_i'}&=\eb_{u_1,u_2}+\eb_{v_i',u_3}+\eb_{v_i4,u_6}+\eb_{u_4,u_5}. 
\end{align*}
This concludes that $\ov{Q} \setminus Q \subset q+Q'$. 

\noindent
``$(\supset)$'' Since we see from \eqref{eq:exc} that $q+Q' \subset \ov{Q}$ holds, 
it is enough to check $q+\alpha' \not\in Q$ for any $\alpha' \in Q'$. 
Let $(\alpha''_v)_{v \in V(G_k)}=q+\alpha'$ and look at the entries $\alpha_{u_4}'',\alpha_{u_5}''$ and $\alpha_{u_6}''$. 
Then $\alpha_{u_4}''+\alpha_{u_5}''+\alpha_{u_6}''$ is always odd (with at least $3$). 
On the other hand, $\alpha_{v_i}''=0$ for each $i$ by definition of $Q'$. 
This implies that $q+\alpha'$ cannot be decomposed into $\eb_{u,v}$'s for some edges $\{u,v\}$ in $G_k$, i.e., $q+\alpha' \not\in Q$.

Moreover, since $\{\eb_{u,v} : \{u,v\} \in G_k'\}$ is linearly independent and consists of $k+6$ vectors, we see that $\kk[Q'] \cong \kk[x_1,\ldots,x_{k+6}]$. 
Hence, $$\Hilb(Q',t)=\frac{1}{(1-t)^{k+6}}.$$

\smallskip

\noindent
\underline{The third step}: 
By the second step together with \eqref{eq:sum}, we see the following: 
\begin{align*}
\Hilb(Q,t)=\frac{h_{\ov{Q}}(t)}{(1-t)^{2k+6}}-t^3\frac{1}{(1-t)^{k+6}}=\frac{h_{\ov{Q}}(t)-t^3(1-t)^k}{(1-t)^{2k+6}}. 
\end{align*}
(Note that $q$ corresponds to a monomial of degree $3$ in $\kk[\ov{Q}]$.) 
Since $\deg (h_{\ov{Q}}(t))=k+3$ and $h_{k+3}=1$ by the first step, we obtain that 
$$\deg (h_{Q}(t))=\begin{cases}
k+2 \;\;&\text{if $k$ is even}, \\
k+3 &\text{if $k$ is odd}. 
\end{cases}$$
This implies the desired conclusion \eqref{eq:con}. 

\smallskip

\noindent
\underline{The fourth step}: Lastly, we discuss the depth of $\kk[Q]$. 
One can verify that $Q'$ is a face of $Q$ of codimension $k$. 
(In fact, $Q'=Q \cap \bigcap_{i=1}^k \{x_{v_i}=0\}$, where each $x_{v_i}=0$ becomes a supporting hyperplane of $Q$.) 
This implies that \eqref{eq:Q_k} gives the decomposition \eqref{eq:decomp}. 
Hence, $\kk[Q]$ does not satisfy $(S_2)$ by Theorem~\ref{thm:S2}. 
Moreover, we can apply the latter statement of Theorem~\ref{thm:hole} and obtain that the depth of $\kk[Q]$ is equal to $1+\dim Q' = k+7$. 
\end{proof}

For the proof of Theorem~\ref{main2}, we recall the notion of join for lattice polytopes and apply the same idea to homogeneous affine monoids. 
Given two lattice polytopes $P \subset \RR^d$ and $P \subset \RR^{d'}$, let $P \star P'$ 
be te convex hull of $$\{(\alpha,{\bf 0}_{d'},0) : \alpha \in P\} \cup \{({\bf 0}_d, \alpha,1) : \alpha' \in P'\}) \subset \RR^{d+d'+1}.$$
We call $P \star P'$ the \textit{join} of $P$ and $P'$. 

Similarly, given two homogeneous affine monoids $Q \subset \ZZ_{\geq 0}^d$ and $Q' \subset \ZZ_{\geq 0}^{d'}$, 
let $\alpha_1,\ldots,\alpha_s$ (resp. $\alpha_1',\ldots,\alpha_{s'}'$) be the minimal generating set of $Q$ (resp. $Q'$). 
We define $$Q \star Q'=\langle (\alpha_1,{\bf 0}_{d'},0),\ldots, (\alpha_s,{\bf 0}_{d'},0),({\bf 0}_d,\alpha_1',1),\ldots,({\bf 0}_d,\alpha_{s'}',1)\rangle \subset \ZZ_{\geq 0}^{d+d'+1}.$$
Then it is straightforward to see that $Q \star Q'$ is also a homogeneous affine monoid. 
Let us call $Q \star Q'$ the \textit{join} of homogeneous affine monoids $Q$ and $Q'$. 
\begin{prop}[{cf. \cite[Lemma 1.3]{HT}}]\label{prop:join}
Given two homogeneous affine monoids $Q \subset \ZZ_{\geq 0}^d$ and $Q' \subset \ZZ_{\geq 0}^{d'}$, 
we have $$\Hilb(Q \star Q',t) =  \Hilb(Q,t) \cdot \Hilb(Q',t)$$ and 
$$\Hilb(\ov{Q \star Q'},t) =  \Hilb(\ov{Q},t) \cdot \Hilb(\ov{Q'},t).$$ 
\end{prop}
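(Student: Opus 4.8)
The plan is to prove the two Hilbert-series identities separately, and to reduce each one to a statement about the underlying monoids rather than working with the algebras directly. The key structural observation is that the join construction is, at the level of monoids, essentially a disjoint union of generators that interact only through the single ``degree-separating'' coordinate $x_{d+d'+1}$. First I would unwind the definition of $Q \star Q'$ and show that every element of $Q \star Q'$ decomposes \emph{uniquely} as a sum $(\beta, {\bf 0}_{d'}, 0) + ({\bf 0}_d, \gamma, m)$ with $\beta \in Q$ and $\gamma \in Q'$, where the last coordinate $m$ equals the number of generators of the second type used. The point is that the first $d$ coordinates of any element are supported entirely on generators of $Q$ and the middle $d'$ coordinates entirely on generators of $Q'$, so the two parts cannot overlap. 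This gives a bijection of graded $\kk$-vector space bases between $\kk[Q \star Q']$ and $\kk[Q] \otimes_\kk \kk[Q']$, and since the join places the first block in degree-$0$ of the last coordinate and the second block in degree-$1$, the grading is additive across the two factors. Hence
\begin{align*}
\Hilb(Q \star Q', t) = \Hilb(Q, t) \cdot \Hilb(Q', t),
\end{align*}
exactly as the product of Hilbert series of a tensor product of graded algebras.

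For the second identity the idea is the same, but I must first identify $\ov{Q \star Q'}$ explicitly. The natural claim is that the join commutes with normalization, i.e.\ $\ov{Q \star Q'} = \ov{Q} \star \ov{Q'}$ (as submonoids of $\ZZ^{d+d'+1}$, with the same separating coordinate). To verify this I would check the two defining ingredients of normalization, namely $\ZZ(Q \star Q') = \ZZ Q \times \ZZ Q' \times \ZZ$ and $\RR_{\geq 0}(Q \star Q') = \cone$ of the two blocks, and confirm that their intersection splits as the product of the corresponding intersections for $Q$ and $Q'$. Concretely, a lattice point lies in the real cone of the join if and only if its $Q$-block lies in $\RR_{\geq 0}Q$ and its $Q'$-block lies in $\RR_{\geq 0}Q'$ with the last coordinate recording the $Q'$-scaling; since these conditions are independent across the two coordinate blocks, the intersection with $\ZZ(Q \star Q')$ factors. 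Once $\ov{Q \star Q'} = \ov{Q} \star \ov{Q'}$ is established, applying the first identity to the \emph{normalized} monoids yields
\begin{align*}
\Hilb(\ov{Q \star Q'}, t) = \Hilb(\ov{Q} \star \ov{Q'}, t) = \Hilb(\ov{Q}, t) \cdot \Hilb(\ov{Q'}, t).
\end{align*}

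The main obstacle I expect is the bookkeeping in the normalization step: one must be careful that the separating coordinate of the join is handled consistently, because the generators of $Q'$ are all placed at height $1$ in the last coordinate while those of $Q$ sit at height $0$, and I need to confirm that this asymmetric embedding does not introduce lattice points in $\ov{Q \star Q'}$ whose last coordinate fails to match the $Q'$-part. This amounts to verifying that the last coordinate of any element of $\ov{Q \star Q'}$ is forced, by membership in $\ZZ(Q \star Q')$ together with the homogeneity constraint, to equal the height dictated by its $Q'$-block, so that no ``extra'' holes appear beyond the product $\ov{Q} \times \ov{Q'}$. The remaining bijection-of-bases and Hilbert-series arguments are then routine, and the cited \cite[Lemma 1.3]{HT} handles the analogous polytope-join statement, so I would lean on that reference to shortcut the cone computation.
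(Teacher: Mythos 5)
Your proof of the first identity is essentially the paper's: the paper, too, establishes $\Hilb(Q\star Q',t)=\Hilb(Q,t)\cdot\Hilb(Q',t)$ by showing that each degree-$k$ element of $Q\star Q'$ splits uniquely into a degree-$i$ element of $Q$ in the first block and a degree-$j$ element of $Q'$ in the second block with $i+j=k$, i.e.\ the convolution identity $\dim_\kk\kk[Q\star Q']_k=\sum_{i+j=k}\dim_\kk\kk[Q]_i\,\dim_\kk\kk[Q']_j$. For the second identity you take a genuinely different route: the paper observes that the cross section polytope of $Q\star Q'$ is the join $P\star P'$ of the cross section polytopes and then quotes \cite[Lemma 1.3]{HT} for Ehrhart series of joins, whereas you compute $\ov{Q\star Q'}=\ZZ(Q\star Q')\cap\RR_{\geq 0}(Q\star Q')$ directly, show it splits as a product, and rerun the convolution argument. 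Your route is more self-contained: it avoids identifying $\Hilb(\ov{Q\star Q'},t)$ with an Ehrhart series, a step which in the paper implicitly rests on spanning-type considerations as in Remark~\ref{rem}.

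Two statements in your write-up must be corrected, though both are repaired by observations you yourself make later. First, the claim $\ZZ(Q\star Q')=\ZZ Q\times\ZZ Q'\times\ZZ$ is false: writing a lattice element as an integer combination of the generators $(\alpha_i,{\bf 0}_{d'},0)$ and $({\bf 0}_d,\alpha_j',1)$, and applying the linear functional defining the grading of $Q'$ (all $\alpha_j'$ lie on $\sum a_i'x_i=b'$ with $b'\neq 0$), forces the last coordinate to equal the $Q'$-degree of the middle block. Hence $\ZZ(Q\star Q')=\{(z,z',\deg_{Q'}z'):z\in\ZZ Q,\ z'\in\ZZ Q'\}$, of rank $\rank \ZZ Q+\rank \ZZ Q'$, not $\rank \ZZ Q+\rank \ZZ Q'+1$; there is no free $\ZZ$-factor. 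This is exactly your own closing remark that the last coordinate is ``forced,'' so state that version and drop the product-with-$\ZZ$ claim; with it, $\ov{Q\star Q'}=\{(w,w',\deg_{Q'}w'):w\in\ov{Q},\ w'\in\ov{Q'}\}$ follows as you intend. Second, $\ov{Q}\star\ov{Q'}$ is not a defined object: the join is defined only for homogeneous monoids, via minimal generators placed at heights $0$ and $1$, and normalizations of homogeneous monoids need not be generated in degree $1$ (for edge rings, \eqref{eq:exc} produces minimal generators $\eb_C+\eb_{C'}$ of degree at least $3$); such a minimal generator $\gamma$ of $\ov{Q'}$ of degree $e$ corresponds to $({\bf 0}_d,\gamma,e)$ in $\ov{Q\star Q'}$, not to $({\bf 0}_d,\gamma,1)$. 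So rather than ``applying the first identity to the normalized monoids,'' state that $\ov{Q\star Q'}\cong\ov{Q}\times\ov{Q'}$ as monoids with degree additive across the two blocks, and note that your tensor-product argument uses only this graded splitting, never generation in degree $1$.
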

\begin{proof}
Let $P$ (resp. $P'$) be the cross section polytope of $Q$ (resp. $Q'$). 
Then that of $Q \star Q'$ is nothing but $P \star P'$ since the hyperplnae containing the generators of $Q \star Q'$ is defined by 
$b'\sum_{i=1}^da_ix_i+b\sum_{i=1}^{d'}a_i'x_{d+i}=bb'$, where $\sum_{i=1}^da_ix_i=b$ (resp. $\sum_{i=1}^{d'}a_i'x_i=b'$) 
is the hyperplane containing the generators of $Q$ (resp. $Q'$). 
Hence, the latter equality of the statement is directly obtained from \cite[Lemma 1.3]{HT}. 

For the former, similarly to the proof of \cite[Lemma 1.3]{HT}, it suffices to show that 
$$\dim_\kk (\kk[Q \star Q']_k)=\sum_{i+j=k}\dim_\kk (\kk[Q])_i\dim_\kk(\kk[Q'])_j,$$
but this directly follows from the description of $Q \star Q'$ as follows: for each $k \in \ZZ_{>0}$, 
\begin{align*}
(Q \star Q') &\cap \left\{b'\sum_{i=1}^da_ix_i+b\sum_{i=1}^{d'}a_i'x_{d+i}=bb'k \right\} \\
&= \left\{m (\alpha,{\bf 0}_{d'},0) + (k-m) ({\bf 0}_d,\alpha',1) : \alpha \in Q, \alpha' \in Q', 0 \leq m \leq k, m \in \ZZ\right\}.
\end{align*}
\end{proof}

Now, we are in the position to give a proof of Theorem~\ref{main2}. 
\begin{proof}[Proof of Theorem~\ref{main2}]
Given a positive integer $m$, let $$Q^{(m)}=\underbrace{Q \star \cdots \star Q}_{m \text{ times}},$$
where $Q$ is the same homogeneous affine monoid as in Example~\ref{ex:Q}. 
It then follows from Proposition~\ref{prop:join} that $\Hilb(Q^{(m)},t)=\Hilb (Q,t)^m$ and $\Hilb(\ov{Q^{(m)}},t)=\Hilb(\ov{Q},t)^m$. 
Hence, $\deg(h_{\ov{Q^{(m)}}}(t))-\deg(h_{Q^{(m)}}(t))=m\left(\deg(h_{\ov{Q}}(t))-\deg(h_Q(t))\right)=m$, as required. 
\end{proof}

\begin{rem}
The difference of the degrees of $h_Q(t)$ and $h_{\ov{Q}}(t)$ can be arbitrarily large. 
For the case $\deg (h_Q(t)) < \deg(h_{\ov{Q}}(t))$, this is just what Theorem~\ref{main2} claims. 
For the case $\deg (h_Q(t)) \geq \deg(h_{\ov{Q}}(t))$, we can find such examples, e.g., in \cite[Theorem A]{M2}. 
In fact, for any nonnegative integer $m$, let $$R_m:=\langle (0,2m+3),(m+1,m+2),(m+2,m+1),(2m+3,0) \rangle \subset \ZZ_{\geq 0}^2.$$
Then $\kk[R_m]$ is nearly Gorenstein with Cohen--Macaulay type $2$ (see \cite[Lemma A ($\alpha$)]{M2}). 
Moreover, we also see that the projective dimension of $\kk[R_m]$ is $2$. 
Thus, by \cite[Theorem 4.10]{M1}, we know that $\displaystyle \Hilb(R_m,t)=\frac{1+2\sum_{i=1}^st^i}{(1-t)^2}$ for some $s$. 
In particular, $h_{R_m}(t)=1+2\sum_{i=1}^s t^i$. 
On the other hand, since $\ov{R_m}=\ZZ_{\geq 0}^2 \cap \{(x,y) : x+y \in (2m+3)\ZZ\}$, 
we also see that $\kk[\ov{R_m}] \cong \kk[X,Y]^{(2m+3)}$, where $\kk[X,Y]^{(n)}$ stands for the $n$-th Veronese subring of $\kk[X,Y]$. 
In particular, $\displaystyle \Hilb(\ov{R_m},t)=\frac{1+(2m+2)t}{(1-t)^2}$. 
Since $h_{R_m}(1)=h_{\ov{R_m}}(1)=2m+3$ (see Proposition~\ref{prop:mul}), we obtain that $s=m+1$. 
Therefore, $\deg(h_{R_m}(t))-\deg(h_{\ov{R_m}}(t))=m$ holds, as desired. 
\end{rem}

\medskip

\end{document}